\documentclass[12pt]{amsart}
\usepackage{amsfonts}
\usepackage{amssymb}
\usepackage{color}
\vfuzz2pt

 \newtheorem{theorem}{Theorem}[section]
 
 \newtheorem{conjecture}[theorem]{Conjecture}
 \newtheorem{lemma}[theorem]{Lemma}
 \newtheorem{proposition}[theorem]{Proposition}
 \newtheorem{problem}[theorem]{Problem}
 \theoremstyle{definition}
 
 \theoremstyle{remark}

\numberwithin{equation}{section}

\DeclareMathOperator{\ord}{ord}

\begin{document}

\title[On the number of subsequence sums]
{On the number of subsequence sums related to the support of a sequence in finite abelian groups}

\author[R. Wang]{Rui Wang}
\address{College of Science, Civil Aviation University of China,
Tianjin, P.R. China 300300}\email{r-wang@cauc.edu.cn}

\author[H. Chao]{Han Chao}
\address{College of Science, Civil Aviation University of China,
Tianjin, P.R. China 300300}\email{974835335@qq.com}

\author[J. Peng]{Jiangtao Peng*}
\address{College of Science, Civil Aviation University of China,
Tianjin, P.R. China 300300}\email{jtpeng1982@aliyun.com}

\keywords{Subsequence sums; $k$-sum; Support of sequence; Abelian group.}

\subjclass[2020]{11P70, 11B50, 11B13}

\thanks{*Corresponding author}

\begin{abstract}
Let $G$ be a finite abelian group and $S$ a sequence with elements of $G$. Let $|S|$ denote the length of $S$ and $\mathrm{supp}(S)$ the set of all the distinct terms in $S$. For an integer $k$ with $k\in [1, |S|]$, let $\Sigma_{k}(S) \subset G$ denote the set of group elements which can be expressed as a sum of a subsequence of $S$ with length $k$. Let $\Sigma(S)=\cup_{k=1}^{|S|}\Sigma_{k}(S)$ and $\Sigma_{\geq k}(S)=\cup_{t=k}^{|S|}\Sigma_{t}(S)$. It is known that if $0\not\in \Sigma(S)$, then $|\Sigma(S)|\geq |S|+|\mathrm{supp}(S)|-1$. In this paper, we determine the structure of a sequence $S$ satisfying $0\notin \Sigma(S)$ and $|\Sigma(S)|= |S|+|\mathrm{supp}(S)|-1$. As a consequence, we can give a counterexample of a conjecture of Gao, Grynkiewicz, and Xia. Moreover, we prove that if $|S|>k$ and  $0\not\in \Sigma_{\geq k}(S)\cup \mathrm{supp}(S)$, then $|\Sigma_{\geq k}(S)|\geq |S|-k+|\mathrm{supp}(S)|$. Then we can give an alternative proof of a conjecture of Hamidoune, which was first proved by Gao, Grynkiewicz, and Xia.
\end{abstract}

\date{}

\maketitle

\section{Introduction and main results}

Let $G$ be a finite abelian group and  $S$ a finite sequence with elements of $G$. Let $|S|$ denote the {\it length} of $S$, $\sigma(S)$ the {\it sum} of the elements of $S$. Let $\mathrm{supp}(S)$ denote the {\it support} of $S$, which is defined as the set of all the distinct terms in $S$. For every integer $k\in [1, |S|]$, let
\begin{equation*}
\Sigma_{k}(S)=\{ \sigma(T) \mid T \mbox{ is a subsequence of } S  \mbox{ with }  |T|=k\}
\end{equation*}
denote the set of {\it $k$-sums} of $S$. Let
\begin{equation*}
\Sigma(S)=\bigcup_{k=1}^{|S|}\Sigma_{k}(S) \mbox{ and } \Sigma_{\geq \ell}(S)=\bigcup_{k=\ell}^{|S|}\Sigma_{k}(S).
\end{equation*}
where $\ell \in [1, |S|]$. A sequence $S$ is called {\it zero-sum }  if $\sigma(S) = 0 \in G$, and {\it zero-sum free}  if $0 \not\in \Sigma(S)$.

The subsequence sum problems study when do we have $0\in \Sigma(S)$ ($0\in \Sigma_{k}(S)$, respectively), and what is the lower bound of $|\Sigma(S)|$ ($|\Sigma_{k}(S)|$, respectively) if $0\not\in \Sigma(S)$ ($0\not\in \Sigma_{k}(S)$, respectively).

\medskip
For a finite abelian group $G$, let $\mathsf D(G)$ denote the smallest integer $\ell \in \mathbb{N}$ such that $0\in \Sigma(S)$ for every sequence $S$ over $G$ of length $\ell$.  We call $\mathsf D(G)$ the {\it Davenport constant} of $G$.  The problem of determining $\mathsf D(G)$ was first proposed by Rogers \cite{Rogers} in 1963, and attracted many researchers (see the survey \cite{GG2006}). An inverse problem associated with the Davenport constant asks that if $0\not\in  \Sigma(S)$, what is the lower bound of $|\Sigma(S)|$?

Let $S$ be a zero-sum free sequence over a finite abelian group $G$. It is not hard (see Lemma~\ref{lemma of ZSFS}) to prove that $|\Sigma(S)|\geq |S|$, and equality holds if and only if $\mathrm{supp}(S)=\{g\}$ for some $g\in G$ with $\ord(g)\geq |S|+1$. It suggests that $|\Sigma(S)|$ and $|\mathrm{supp}(S)|$ are not independent. In 1972, Eggleton and Erd\H{o}s \cite{EE72} proved that $|\Sigma(S)|\geq 2|\mathrm{supp}(S)|-1$. In 2001, Freeze, Smith \cite{FS01} proved the following result, which also can be found in \cite[Proposition 5.3.5]{GH06}.
\begin{theorem}\cite[Proposition 5.3.5]{GH06}\label{GHZSFS}
Let $G$ be a finite abelian group and $S$ a zero-sum free sequence over $G$. Then
\begin{equation*}
|\Sigma(S)|\geq |S|+|\mathrm{supp}(S)|-1.
\end{equation*}
\end{theorem}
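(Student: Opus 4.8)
\emph{The plan is to} argue by induction on $|S|$. The base case $|S|=1$ is immediate: then $S=(g)$ with $g\neq 0$, so $|\Sigma(S)|=1=|S|+|\mathrm{supp}(S)|-1$. For the inductive step, fix a term $g\in\mathrm{supp}(S)$, let $S'$ be the sequence obtained from $S$ by deleting one copy of $g$, and set $B=\Sigma(S')\cup\{0\}$. Since every subsequence of $S$ either avoids or contains the chosen copy of $g$, one gets the decomposition
\begin{equation*}
\Sigma(S)\cup\{0\}=B\cup(g+B),\qquad |B|=|\Sigma(S')|+1,
\end{equation*}
the last equality because $0\notin\Sigma(S')$. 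Hence $|\Sigma(S)|=|\Sigma(S')|+|(g+B)\setminus B|$, and the whole problem reduces to bounding the \emph{gain} $|(g+B)\setminus B|$ from below: I need gain $\geq 1$ when $g$ occurs in $S$ with multiplicity at least $2$ (so $\mathrm{supp}$ is unchanged), and gain $\geq 2$ when $g$ occurs with multiplicity $1$ (so $|\mathrm{supp}(S')|=|\mathrm{supp}(S)|-1$). In both situations $S'$ is zero-sum free and shorter, so the inductive hypothesis applies to it, and the stated bound for $S$ follows from the correct gain.

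In the multiplicity $\geq 2$ case it suffices to show $(g+B)\setminus B\neq\emptyset$. If not, then $g+B\subseteq B$, and since $B$ is finite this forces $g+B=B$, so $B$ is a union of cosets of $\langle g\rangle$. As $0\in B$, the whole subgroup $\langle g\rangle$ lies in $B$; in particular $-g=(\ord(g)-1)g\in\Sigma(S')$, and adjoining the deleted copy of $g$ produces a nonempty zero-sum subsequence of $S$, contradicting that $S$ is zero-sum free. Thus the gain is at least $1$.

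The multiplicity $1$ case is the heart of the argument, and here $g\notin\mathrm{supp}(S')$. A first new element is $g+\sigma(S')=\sigma(S)$: it is nonzero, and if it equaled $\sigma(T)$ for a subsequence $T$ of $S'$, then the deleted copy of $g$ together with $S'T^{-1}$ would be a nonempty zero-sum subsequence, again impossible. To produce a second new element I would count the gain coset by coset: writing $H=\langle g\rangle$ and $B_C=B\cap C$ for each $H$-coset $C$, translation by $g$ preserves each coset and $|(g+B)\setminus B|=\sum_C a(B_C)$, where $a(B_C)$ is the number of cyclic ``arcs'' of $B_C$ inside $C\cong\mathbb{Z}_{\ord(g)}$, a full coset contributing $0$. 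The coset $H$ already contributes at least one arc, since $B\cap H\neq H$ (otherwise $-g\in\Sigma(S')$, a zero-sum as above); and if $\sigma(S')\notin H$, then the coset $\sigma(S')+H$ contributes a further arc, because $\sigma(S')\in B$ while $g+\sigma(S')\notin B$ exhibits an exit there. This gives gain $\geq 2$ whenever $\sigma(S')\notin\langle g\rangle$.

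The hard part, as I expect, is the remaining degenerate configuration $\sigma(S')\in\langle g\rangle$ in which $B\cap H$ is a single arc and every other $H$-coset meeting $B$ is full. Using $-g\notin\Sigma(S')$ and $g+\sigma(S')\notin B$ one pins this arc down to $\{0,g,2g,\dots,jg\}$ with $\sigma(S')=jg$, and I would eliminate the configuration by reducing to the cyclic group $\langle g\rangle$: a full coset $y+H\subseteq\Sigma(S')$ together with $\sigma(S')\in H$ and the quotient $G\to G/H$ forces additional subsequence sums that collide into a zero-sum, while the alternative that every term of $S'$ already lies in $\langle g\rangle$ reduces the claim to the cyclic case, where $\Sigma(S')$ being the interval $\{g,\dots,jg\}$ is incompatible with $g\notin\mathrm{supp}(S')$. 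Equivalently, the entire inductive step can be recast through Kneser's or Vosper's theorem applied to the sumset $B+\{0,g,2g,\dots,mg\}$, where $m$ is the multiplicity of $g$; the point is that one must beat the Cauchy--Davenport bound by exactly one using zero-sum freeness, and the extremal arithmetic-progression case is precisely this degeneracy. Either way, disposing of this single degenerate case is where essentially all the difficulty lies.
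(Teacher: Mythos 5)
Your reduction is set up correctly: the identity $\Sigma(S)\cup\{0\}=B\cup(g+B)$ with $B=\Sigma(S')\cup\{0\}$, the fact that $\sigma(S)\in(g+B)\setminus B$ (so the gain is always at least $1$), and the arc-count $|(g+B)\setminus B|=\sum_C a(B_C)$ over cosets of $\langle g\rangle$ are all sound, as is the observation that the coset $\langle g\rangle$ itself always contributes an arc. But the proof is not complete: the entire burden lands on the degenerate case ($\sigma(S')\in\langle g\rangle$, $B\cap\langle g\rangle=\{0,g,\dots,jg\}$, every other coset meeting $B$ full), and there you only state intentions. The claim that a full coset $y+\langle g\rangle\subseteq\Sigma(S')$ ``forces additional subsequence sums that collide into a zero-sum'' is exactly the step that needs a disjointness argument (knowing $-\sigma(T)\in\Sigma(S')$ for some subsequence $T$ does not produce a zero-sum subsequence unless the two witnessing subsequences can be chosen disjoint), and the claim that $\Sigma(S')=\{g,\dots,jg\}$ is ``incompatible with $g\notin\mathrm{supp}(S')$'' is asserted without proof; since $g=\sigma(T)$ can arise from integer lifts wrapping past $\ord(g)$, ruling it out requires a genuine argument about which residues the partial sums can occupy. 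The appeal to Kneser/Vosper is a plan, not a proof. So there is a real gap, and you have located it yourself.

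It is worth knowing that this theorem has a two-line proof from the tools the paper already quotes, and this is effectively what the paper relies on (it replays the argument at the start of its proof of Theorem~\ref{mainresults1}). Write $S=S_1S_2$ where $S_1$ is a subsequence with $\mathrm{supp}(S_1)=\mathrm{supp}(S)$ and $|S_1|=|\mathrm{supp}(S)|$, i.e.\ $S_1$ is a \emph{set}. By Lemma~\ref{lemma of BEN} (Bovey--Erd\H{o}s--Niven), $|\Sigma(S)|\geq|\Sigma(S_1)|+|\Sigma(S_2)|$; by Lemma~\ref{lemma of GH}, a zero-sum free set $A$ satisfies $|\Sigma(A)|\geq 2|A|-1$; and by Lemma~\ref{lemma of ZSFS}, $|\Sigma(S_2)|\geq|S_2|$. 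Hence
\begin{equation*}
|\Sigma(S)|\geq\bigl(2|\mathrm{supp}(S)|-1\bigr)+|S_2|=|S|+|\mathrm{supp}(S)|-1.
\end{equation*}
All of the delicate structural analysis is thereby outsourced to Lemma~\ref{lemma of BEN} and the classification of small zero-sum free sets, which is precisely the machinery your one-element-at-a-time induction ends up having to rebuild in the degenerate case.
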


By Lemma~ \ref{lemma of ZSFS}, the lower bound in Theorem~\ref{GHZSFS} is the best possible.

\medskip

The famous Erd\H{o}s-Ginzburg-Ziv Theorem \cite{EGZ}, proved in 1961, states that if $G$ is a cyclic group of order $n$ and $S$ is a sequence over $G$ with $|S|\geq 2n-1$, then $0\in \Sigma_{n}(S)$. In 1996, Gao \cite{Gao3} proved that if $G$ is an abelian group of order $n$ and $S$ is a sequence over $G$ with $|S|\geq n+\mathsf D(G)-1$, then $0\in \Sigma_{n}(S)$. This result, which gives a fundamental relation between the  Erd\H{o}s-Ginzburg-Ziv Theorem and the Davenport constant, is called the Gao Theorem.

Let $G$ be an abelian group of order $n$ and $S$  a sequence over  $G$ with $|S|=n+r$, where $r \in [0, \mathsf D(G)-2]$. An inverse problem associated with the Gao Theorem asks that if $0\not\in  \Sigma_{n}(S)$, what is the lower bound of $|\Sigma_{n}(S)|$? In 1999, Bollob\'{a}s and Leader \cite{BL99} proved that $|\Sigma_{n}(S)|\geq r+1$. Yu \cite[Theorem 2]{Yu2004} and Xia et al. \cite[Theorem 1.1]{XQQ2014}  proved independently that $|\Sigma_{n}(S)|= r+1$ if and only if $|\mathrm{supp}(S)|=2$. In 2016, Gao, Grynkiewicz, and Xia \cite{GGX2016} proved the following result, which confirmed a conjecture of Hamidoune \cite{Ham2003}.

\begin{theorem}\cite[Theorem 1.4]{GGX2016}\label{GGXNZSFS}
Let $G$ be an abelian group of order $n$ and $S$ a sequence over $G$ with $|S|=n+r$, where $r \in [1, \mathsf D(G)-2]$. Suppose $0\not\in \Sigma_{n}(S)$. Then $|\Sigma_{n}(S)|\geq r+|\mathrm{supp}(S)|-1$.
\end{theorem}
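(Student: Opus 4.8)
The plan is to deduce Theorem~\ref{GGXNZSFS} from the lower bound for $|\Sigma_{\geq k}(S)|$ proved in this paper, the bridge between \emph{fixed-length} $n$-sums and \emph{variable-length} $\geq k$-sums being supplied by a translation together with the terms equal to $0$. First I would normalize $S$ by a translation: pick $g_0 \in \mathrm{supp}(S)$ of maximal multiplicity $m$ and replace $S$ by $S-g_0$ (subtract $g_0$ from every term). Since $|G|=n$ forces $ng_0=0$, every length-$n$ subsequence sum changes by $ng_0=0$, so $\Sigma_n$ is unchanged as a set, $0\notin\Sigma_n$ is preserved, and $|\mathrm{supp}|$ is unchanged. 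Hence I may assume $0\in\mathrm{supp}(S)$ with multiplicity $m$ and write $S=0^{m}T$ with $0\notin\mathrm{supp}(T)$ and $|T|=n+r-m$. Note $m\leq n-1$: if $m\geq n$, then $n$ copies of $0$ already give $0\in\Sigma_n(S)$, contrary to hypothesis; in particular $k:=n-m\geq 1$.

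The key identity is that padding with zeros turns a fixed length into a range: a length-$n$ subsequence of $0^{m}T$ consists of $j$ zeros and $n-j$ terms of $T$, so
\begin{equation*}
\Sigma_n(S) \;=\; \bigcup_{i=n-m}^{\min(n,\,|T|)} \Sigma_i(T).
\end{equation*}
In the main case $m\geq r$ one has $|T|=n+r-m\leq n$, the upper limit is $|T|$, and the union is exactly $\Sigma_{\geq n-m}(T)$. I would then apply the $\Sigma_{\geq k}$-bound to $T$ with $k=n-m$. Its hypotheses hold: $|T|=n+r-m>n-m=k$ because $r\geq 1$; $0\notin\mathrm{supp}(T)$ by construction; and $0\notin\Sigma_{\geq n-m}(T)=\Sigma_n(S)$ by assumption. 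This yields $|\Sigma_n(S)|=|\Sigma_{\geq n-m}(T)|\geq |T|-(n-m)+|\mathrm{supp}(T)| = r+|\mathrm{supp}(T)| = r+|\mathrm{supp}(S)|-1$, exactly the desired bound; it also makes transparent where the $-1$ originates, namely the one distinguished value $0=g_0$.

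The main obstacle is the complementary case $m<r$, i.e.\ when no term of $S$ is repeated as many as $r$ times. Here $|T|=n+r-m>n$, so the union above is \emph{capped} at $n$ rather than running up to $|T|$, and a single use of the $\Sigma_{\geq k}$-bound (applied to $T$, or to a length-$n$ subsequence of $T$ carrying the full support, which exists since $|\mathrm{supp}(T)|\leq n-1<|T|$) only delivers $m+|\mathrm{supp}(S)|-1$, short of the target by $r-m$. To recover the missing $r-m$ I would pass to complements inside $T$: writing $j=|T|-i$, the same union equals $\sigma(T)-\bigcup_{j=r-m}^{r}\Sigma_j(T)$, so it suffices to bound the low-index band $\bigcup_{j=r-m}^{r}\Sigma_j(T)$ from below under the single constraint $\sigma(T)\notin\bigcup_{j=r-m}^{r}\Sigma_j(T)$. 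I expect this band to be the crux. The natural route is induction on $r$: the base $r=1$ is the exact identity $\Sigma_n(S)=\sigma(S)-\mathrm{supp}(S)$ (equality), and the inductive step peels off one zero at a time, the point being to show that each newly admitted bottom layer $\Sigma_{n-m}(T)$ contributes at least one genuinely new element until the full $r$ has been accumulated. This is precisely the kind of additive increment one secures through a Kneser-type argument (or the DeVos--Goddyn--Mohar theorem), and it is exactly here that the degenerate configurations forcing sharpness must be isolated.
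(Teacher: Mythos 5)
Your overall strategy---translate so that the element of maximal multiplicity $m=\mathsf h(S)$ becomes $0$, write $S=0^{m}T$ with $0\notin\mathrm{supp}(T)$, and apply the bound of Theorem~\ref{mainresults3} to $T$ with $k=n-m$---is exactly the paper's argument, and your treatment of the case $m\geq r$ is complete and correct. The problem is the complementary case $m<r$ (which does occur, e.g.\ for groups with $\mathsf D(G)-2>\mathsf h(S)$ such as $C_2^d$ with $d$ large): there you correctly note that the naive identity only gives $\Sigma_n(S)=\bigcup_{i=n-m}^{n}\Sigma_i(T)$ and that a direct application of Theorem~\ref{mainresults3} to a length-$n$ subsequence of $T$ yields only $m+|\mathrm{supp}(S)|-1$, but your proposed repair---lower-bounding the band $\bigcup_{j=r-m}^{r}\Sigma_j(T)$ by an induction ``peeling off one zero at a time'' backed by a Kneser-type or DeVos--Goddyn--Mohar argument---is never carried out. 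You explicitly flag it as ``the crux,'' so as written the proof is incomplete precisely where the difficulty lies.

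The missing ingredient is that the cap at $n$ is harmless: one has the exact identity $\Sigma_n(0^{n-k}T)=\Sigma_{\geq k}(T)$ whenever $|T|\geq k$ and $\mathsf v_0(S)\geq\mathsf h(S)-1$, which is the paper's Lemma~\ref{Lemma 2.5}, quoted from \cite{GGX2016}. Intuitively, if $W$ is a subsequence of $T$ with $|W|>n\geq\mathsf D(G)$, then $W$ contains a nonempty zero-sum subsequence, so $\sigma(W)$ is already realized by a shorter subsequence, and the hypothesis $\mathsf v_0(S)\geq\mathsf h(S)-1$ (guaranteed by your normalization, since then $\mathsf v_0(S)=\mathsf h(S)$) lets one steer the length into the window $[k,n]$; hence $\Sigma_i(T)\subseteq\bigcup_{i'=k}^{n}\Sigma_{i'}(T)$ for all $i>n$. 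With this identity the case split disappears entirely and a single application of Theorem~\ref{mainresults3} to $T$ gives $|\Sigma_n(S)|=|\Sigma_{\geq k}(T)|\geq |T|-k+|\mathrm{supp}(T)|=r+|\mathrm{supp}(S)|-1$. You should either cite this lemma or prove the identity directly; the heavy machinery you invoke for the band is not needed.
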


Moreover, Gao, Grynkiewicz, and Xia \cite{GGX2016} proposed the following conjecture.

\begin{conjecture}\cite[Conjecture 4.5]{GGX2016}\label{conjecture of GGX}
Let $G$ be an abelian group of order $n$ and $S$ a sequence over $G$ with $|S|=n+r$, where $r \in [1, \mathsf D(G)-2]$. Suppose $0\not\in \Sigma_{n}(S)$. Then there exists a zero-sum free sequence $T$ over $G$ with $|T|=r+1$ such that $|\Sigma_{n}(S)| \geq |\Sigma(T)|$ and $|\mathrm{supp}(T)|\geq \min\{r+1, |\mathrm{supp}(S)|-1\}$.
\end{conjecture}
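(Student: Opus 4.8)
The plan is to turn the conjecture into a pure existence question inside $G$, by comparing $|\Sigma_n(S)|$ against the competing quantity $|\Sigma(T)|$ through the common linear expression $r+|\mathrm{supp}|-1$, rather than trying to compute $|\Sigma_n(S)|$ exactly. Write $s=\min\{r+1,\,|\mathrm{supp}(S)|-1\}$ for the target support demanded of $T$. The key remark is that the conjecture never forces us to identify $|\Sigma_n(S)|$ precisely: it suffices to exhibit one zero-sum free sequence $T$ of length $r+1$ whose subsequence sumset is no larger than the \emph{a priori} lower bound already guaranteed for $|\Sigma_n(S)|$, while keeping $|\mathrm{supp}(T)|$ at least $s$.

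The first step is to pin down that lower bound, and this is the only place the hypothesis $0\notin\Sigma_n(S)$ is used. I claim $|\Sigma_n(S)|\ge r+s$. Indeed, Theorem~\ref{GGXNZSFS} gives $|\Sigma_n(S)|\ge r+|\mathrm{supp}(S)|-1$. If $s=|\mathrm{supp}(S)|-1$, this is exactly $r+s$; and if $s=r+1$, then $|\mathrm{supp}(S)|\ge r+2$, whence $r+|\mathrm{supp}(S)|-1\ge 2r+1=r+s$. So $|\Sigma_n(S)|\ge r+s$ in either case, with nothing further needed about the internal structure of $S$.

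The second step is to produce the witness $T$, and here the tension with Theorem~\ref{GHZSFS} becomes visible. We want a zero-sum free $T$ over $G$ with $|T|=r+1$, $|\mathrm{supp}(T)|\ge s$, and $|\Sigma(T)|\le r+s$. But Theorem~\ref{GHZSFS} forces $|\Sigma(T)|\ge |T|+|\mathrm{supp}(T)|-1\ge r+s$, so we are driven to $|\mathrm{supp}(T)|=s$ together with equality $|\Sigma(T)|=r+s$ in Theorem~\ref{GHZSFS}. In other words, $T$ must be an extremal sequence for the Freeze--Smith bound with support exactly $s$. Granting such a $T$, the argument closes at once: $|\Sigma_n(S)|\ge r+s=|\Sigma(T)|$ and $|\mathrm{supp}(T)|=s\ge\min\{r+1,\,|\mathrm{supp}(S)|-1\}$. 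For the degenerate supports this is immediate; for instance when $s\le 2$ one may take $T=g^{[r+1]}$ with $\ord(g)\ge r+2$, or $T=g^{[r]}\cdot h$ with the single extra generator $h$ chosen so that the two translated progressions overlap in all but one point, giving $|\Sigma(T)|=r+s$.

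The hard part, which I expect to be the genuine obstacle, is realizing a Freeze--Smith extremal zero-sum free sequence of prescribed support $s\ge 3$ inside the given group $G$ of order $n$. This is a statement about the fine structure of $G$, not about $S$, and it is far from automatic: attaining equality $|\Sigma(T)|=|T|+|\mathrm{supp}(T)|-1$ requires the distinct values of $T$ to be arranged so that their translated progressions telescope into an interval-like set, which transplants easily into a group of large exponent but can be impossible in a group of small exponent. In an elementary abelian $2$-group, for example, a support of size $s$ forces $s$ essentially independent elements and hence an \emph{exponentially} large $\Sigma(T)$, which will exceed the modest target $r+s$. Thus the whole conjecture hinges on whether $G$ admits a Freeze--Smith extremal zero-sum free sequence of the required length and support; controlling this existence uniformly over all $G$ and all admissible $r$ is exactly the step I would expect to resist a direct attack, and it is where one must decide whether the conjecture holds at all.
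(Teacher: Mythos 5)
Your proposal is not a proof, and the gap you flagged at the end is fatal in the strongest possible sense: the statement is false, and this paper's purpose in stating the conjecture is to disprove it (Theorem~\ref{mainresults2}). Your reduction correctly isolates the crux. Since Theorem~\ref{GHZSFS} forces $|\Sigma(T)|\geq (r+1)+|\mathrm{supp}(T)|-1\geq r+s$, whenever $|\Sigma_n(S)|$ actually \emph{equals} the a priori bound $r+s$, any admissible witness $T$ must achieve equality in Theorem~\ref{GHZSFS} with $|\mathrm{supp}(T)|=s$ exactly. You then left the existence of such a Freeze--Smith extremal sequence as an open obstacle, ``where one must decide whether the conjecture holds at all.'' The paper decides it negatively, and the instrument is Theorem~\ref{mainresults1}, which classifies all equality cases of Theorem~\ref{GHZSFS}: the only extremal zero-sum free sequences with support of size $\geq 3$ are of the form $g_1\cdot g_2\cdot(g_1+g_2)$ with $2g_2=0$, $2g_1\neq 0$, $2g_1+g_2\neq 0$ --- of length exactly $3$ and requiring an element of order $2$. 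The counterexample exploits precisely this: for odd $n\geq 7$, $G=\langle g\rangle\cong C_n$ and $S=0^{n-3}\cdot g^{n-3}\cdot(2g)\cdot((n-1)g)$ with $r=n-4$, one computes $\Sigma_n(S)=\{g,2g,\ldots,(n-1)g\}$, so $0\notin\Sigma_n(S)$ and $|\Sigma_n(S)|=n-1=r+s$ with $s=\min\{r+1,|\mathrm{supp}(S)|-1\}=3$; any $T$ with $|T|=n-3$, $|\mathrm{supp}(T)|\geq 3$, $|\Sigma(T)|\leq n-1$ would have to be extremal with support exactly $3$, which is impossible since $G$ has odd order (and since $|T|\geq 4$ in any case). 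So your ``hard part'' is not merely hard; in this group it is vacuous, and the conjecture fails.

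Two further remarks on the internal logic of your attempt. First, your scheme of comparing against the a priori bound is sufficient but not necessary: when $|\Sigma_n(S)|>r+s$ there is slack, and a non-extremal $T$ could serve, so failure of your construction alone would not refute the conjecture --- what makes the paper's example decisive is that $|\Sigma_n(S)|=r+s$ exactly, eliminating all slack and forcing the extremal classification to apply. Second, even your ``degenerate'' cases $s\leq 2$ are not free: $T=g^{r+1}$ needs $\ord(g)\geq r+2$ and $T=g^{r}\cdot(2g)$ needs $\ord(g)\geq r+3$ (this is form (3) of Theorem~\ref{mainresults1}), and the hypothesis $r\in[1,\mathsf D(G)-2]$ does not supply such elements in groups of small exponent, e.g.\ $C_2^m$; so even there the existence question you deferred requires an argument. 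Your instinct that attaining equality in Theorem~\ref{GHZSFS} with prescribed support is the decisive structural question about $G$ was exactly right --- the paper answers that question via the inverse result Theorem~\ref{mainresults1} and thereby runs your reduction in reverse to manufacture the counterexample.
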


In this paper, we shall determine when the equality holds in Theorem~\ref{GHZSFS}, and give a counterexample of Conjecture~\ref{conjecture of GGX}. Moreover, we give a generalization of Theorem~\ref{GHZSFS}, by which we obtain an alternative proof of Theorem~\ref{GGXNZSFS}. Our main results state as following (See Section 2 for the details of the notation).

\begin{theorem}\label{mainresults1}
Let $G$ be a finite abelian group and $S$ a zero-sum free sequence over $G$. Then $|\Sigma(S)|= |S|+|\mathrm{supp}(S)|-1$ if and only if $S$ is one of the following forms.
\begin{itemize}
\item[(1)] $S=g^{|S|}$ for some $g\in G\setminus\{0\}$ with $\ord(g)\geq |S|+1$.
\item[(2)] $S=g_1\cdot g_2$ for some $g_1, g_2\in G\setminus\{0\}$ with $g_1\neq g_2$.
\item[(3)] $S=g^{|S|-1}\cdot(2g)$ for some $g\in G\setminus\{0\}$ with $|S|\geq 3$ and $\ord(g)\geq |S|+2$.
\item[(4)] $S=g_1\cdot g_2 \cdot (g_1+g_2)$, where $g_1, g_2\in G\setminus\{0\}$, $2g_1\neq 0$, $2g_2= 0$ and $2g_1+g_2\neq 0$.
\end{itemize}
\end{theorem}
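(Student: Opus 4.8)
The plan is to dispatch the easy ``if'' direction by direct computation and to attack the ``only if'' direction by induction on $|S|$, using throughout a sumset description of subsequence sums. Writing $\Sigma_0(T)=\{0\}\cup\Sigma(T)$ for the set of all subsequence sums including the empty one, one has for any $h\in\mathrm{supp}(S)$ the recursion $\Sigma_0(S)=\Sigma_0(Sh^{-1})\cup\bigl(h+\Sigma_0(Sh^{-1})\bigr)$, and more generally $\Sigma_0(S)=\{0,s_1\}+\cdots+\{0,s_{|S|}\}$ where $s_1,\dots,s_{|S|}$ are the terms of $S$. Since $0\notin\Sigma(S)$, the hypothesis $|\Sigma(S)|=|S|+|\mathrm{supp}(S)|-1$ is equivalent to $|\Sigma_0(S)|=|S|+|\mathrm{supp}(S)|$. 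For the ``if'' direction I would simply list $\Sigma(S)$ in each of the four cases: in (1) and (3) the sums form an interval $\{g,2g,\dots\}$ inside $\langle g\rangle$ whose length is controlled by the order hypothesis, while in (2) and (4) there are only finitely many sums to enumerate, and the stated conditions (for instance $2g_1\neq0$, $2g_2=0$, $2g_1+g_2\neq0$ in (4), together with zero-sum freeness forcing $g_1+g_2\neq0$ in (2)) are precisely what makes the listed sums pairwise distinct and nonzero.

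For the ``only if'' direction, fix an extremal zero-sum free $S$ and set $d=|\mathrm{supp}(S)|$. The case $d=1$ is immediate from Lemma~\ref{lemma of ZSFS} and yields form~(1). The engine of the argument is a descent: choose $h\in\mathrm{supp}(S)$, put $S'=Sh^{-1}$, and compare sizes through the recursion. First I would record the key rigidity fact: if the translate $h+\Sigma_0(S')$ contributes no new element, then $h+\Sigma_0(S')=\Sigma_0(S')$, so $h$ stabilizes $\Sigma_0(S')$; since $0\in\Sigma_0(S')$ this gives $-h\in\Sigma(S')$, hence a zero-sum subsequence of $S$, contradicting zero-sum freeness. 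Thus deleting any one term always creates at least one new element of $\Sigma_0$.

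When $S$ has a \emph{repeated} element I would delete one copy of it, so that $\mathrm{supp}(S')=\mathrm{supp}(S)$ and Theorem~\ref{GHZSFS} gives $|\Sigma_0(S')|\ge|S'|+d=|\Sigma_0(S)|-1$. Together with the rigidity fact this pins $|\Sigma_0(S')|$ to the extremal value $|S'|+d$, so $S'$ is again extremal and the induction hypothesis applies: $S'$ is one of the four forms. It then remains to determine which single-term extensions $S=hS'$ with $h\in\mathrm{supp}(S')$ remain extremal. A short computation in each case shows the only survivors are form~(1) (extending $g^{|S'|}$ by $g$) and form~(3) (extending $g_1g_2$ by $g_1=g$ with $g_2=2g$, or extending $g^{|S'|-1}(2g)$ by $g$); every other extension---the non-doubling extension of form~(2), a second copy of $2g$ in form~(3), or any repeated-element extension of form~(4)---strictly exceeds the bound and is excluded.

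The remaining and genuinely hardest case is when $S$ is \emph{squarefree}, i.e. $|S|=d\ge2$, which is exactly the equality case of the Eggleton--Erd\H{o}s bound $|\Sigma(S)|\ge2d-1$ \cite{EE72} and cannot be escaped by deletion, since deleting terms preserves squarefreeness. Here $d=2$ gives form~(2) at once, and a finite enumeration of the seven possible sums of $g_1g_2g_3$ settles $d=3$, yielding precisely the congruence conditions of form~(4). The obstacle is to rule out $d\ge4$. Deleting a term $g_d$ gives a squarefree $S'$ with $|\Sigma_0(S')|\in\{2d-2,2d-1\}$; the value $2d-2$ makes $S'$ extremal, so by induction $S'$ is form~(4) with $d=4$, which one excludes by checking that appending a further distinct term to a form~(4) sequence always overshoots the bound. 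The delicate possibility is $|\Sigma_0(S')|=2d-1$, where $S'$ lies one above its own extremal bound and the induction hypothesis is silent: by the rigidity computation the translate $g_d+\Sigma_0(S')$ then meets $\Sigma_0(S')$ in all but one element, so $\Sigma_0(S')$ is almost invariant under translation by $g_d$. I would exploit this near-periodicity through Kneser's theorem, applied to the sumset $\{0,s_1\}+\cdots+\{0,s_{d-1}\}$, to force its stabilizer to be large and hence $\Sigma_0(S')$ to be essentially a union of $\langle g_d\rangle$-cosets; tracing this back collapses the support of $S'$ below $d-1$, the desired contradiction. Controlling this near-periodic regime uniformly in $d$ is where I expect the real difficulty to lie; once it is dispatched, assembling the surviving configurations produces exactly the list (1)--(4).
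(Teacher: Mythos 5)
Your ``if'' direction and the repeated-element branch of your descent are sound: the rigidity observation (that $h+\Sigma_0(Sh^{-1})\subseteq\Sigma_0(Sh^{-1})$ would force $-h\in\Sigma(Sh^{-1})$ and hence a zero-sum subsequence) is correct, and combined with Theorem~\ref{GHZSFS} it does pin $Sh^{-1}$ to the extremal value when $h$ is repeated, after which the extension check is a finite verification. The genuine gap is exactly where you locate it yourself: the squarefree case $|S|=|\mathrm{supp}(S)|=d\ge 4$. There you need to show that a zero-sum free \emph{subset} of size $d\ge4$ satisfies $|\Sigma(S)|\ge 2d$, and your proposed route through near-periodicity and Kneser's theorem is only a sketch whose key step (``controlling this near-periodic regime uniformly in $d$'') you explicitly leave unresolved. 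In particular the sub-case $|\Sigma_0(Sg_d^{-1})|=2d-1$, where the deleted sequence sits one above its own extremal bound and your induction hypothesis says nothing, is not actually handled. As written, the proof is incomplete.

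The irony is that the missing statement is precisely Lemma~\ref{lemma of GH}.3, already quoted in the paper: a zero-sum free subset of cardinality at least $4$ has $|\Sigma(S)|\ge 2|S|$. Citing it kills the $d\ge4$ squarefree case in one line and makes your entire inductive machinery for that branch unnecessary. The paper's own proof avoids induction altogether: it splits $S=S_1S_2$ with $S_1$ a copy of the support and $S_2$ the leftover multiplicities, applies the superadditivity $|\Sigma(S)|\ge|\Sigma(S_1)|+|\Sigma(S_2)|$ of Lemma~\ref{lemma of BEN} together with Theorem~\ref{GHZSFS} to force $|\mathrm{supp}(S_2)|\le1$ and $|\Sigma(S_1)|\le 2|S_1|-1$, and then Lemma~\ref{lemma of GH} reduces everything to three short explicit cases. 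If you import Lemma~\ref{lemma of GH}.3 to close your squarefree gap, your deletion-based induction becomes a valid (if longer) alternative; without it, the argument does not go through.
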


\begin{theorem}\label{mainresults2}
Let $n\geq 7$ be an odd number. Let $G=\langle g\rangle$ be a cyclic group of order $n$. Let $S=0^{n-3}\cdot g^{n-3}\cdot (2g)\cdot ((n-1)g)$ and $r=|S|-n=n-4$. Then $|S|=n+r$ with $0\not\in \Sigma_{n}(S)$, and for every zero-sum free sequence $T$ over $G$ with $|T|=r+1$, we have either $|\Sigma_{n}(S)| < |\Sigma(T)|$ or $|\mathrm{supp}(T)|< \min\{r+1, |\mathrm{supp}(S)|-1\}$.
\end{theorem}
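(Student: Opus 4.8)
The plan is to compute the three quantities attached to $S$—namely $r$, $|\mathrm{supp}(S)|$, and $|\Sigma_{n}(S)|$—and then to weigh the resulting value of $|\Sigma_{n}(S)|$ against the general lower bound of Theorem~\ref{GHZSFS}, using the classification in Theorem~\ref{mainresults1} to force a \emph{strict} inequality in the borderline case. First I would record the elementary data. Since $S$ consists of $n-3$ copies of $0$, $n-3$ copies of $g$, one $2g$ and one $(n-1)g$, we have $|S|=2n-4$, so indeed $r=|S|-n=n-4$. Because $n\geq 7$, the four elements $0,\,g,\,2g,\,(n-1)g=-g$ are pairwise distinct (this uses $2g\neq 0$ and $3g\neq 0$), whence $|\mathrm{supp}(S)|=4$ and $\min\{r+1,\,|\mathrm{supp}(S)|-1\}=\min\{n-3,3\}=3$.

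The central computation is to pin down $\Sigma_{n}(S)$. A length-$n$ subsequence $T$ of $S$ is determined by the multiplicities $a,b,c,d$ of $0,g,2g,(n-1)g$ it uses, subject to $a+b+c+d=n$, $0\leq a,b\leq n-3$ and $c,d\in\{0,1\}$; its sum is $\sigma(T)=(b+2c-d)g$ because $(n-1)g=-g$. The constraint $a\leq n-3$ forces $b+c+d\geq 3$, and I would simply run through the four choices of $(c,d)$. The cases $(0,0),(1,0),(0,1),(1,1)$ yield, after intersecting with $b\leq n-3$, the attainable coefficient ranges $[3,n-3]$, $[4,n-1]$, $[1,n-4]$ and $[2,n-2]$ respectively. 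Their union is exactly $\{1,2,\dots,n-1\}$, so $\Sigma_{n}(S)=G\setminus\{0\}$; in particular $0\notin\Sigma_{n}(S)$ and $|\Sigma_{n}(S)|=n-1$.

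It then remains to test the demand of Conjecture~\ref{conjecture of GGX} against these values. Let $T$ be any zero-sum free sequence over $G$ with $|T|=r+1=n-3$. If $|\mathrm{supp}(T)|\leq 2$, then $|\mathrm{supp}(T)|<3=\min\{r+1,|\mathrm{supp}(S)|-1\}$ and the second alternative of the statement holds. If instead $|\mathrm{supp}(T)|\geq 3$, I would invoke Theorem~\ref{GHZSFS} to obtain
\begin{equation*}
|\Sigma(T)|\geq |T|+|\mathrm{supp}(T)|-1=(n-3)+|\mathrm{supp}(T)|-1\geq n-1 .
\end{equation*}
When $|\mathrm{supp}(T)|\geq 4$ this already gives $|\Sigma(T)|\geq n>n-1=|\Sigma_{n}(S)|$. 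When $|\mathrm{supp}(T)|=3$ the bound only reads $|\Sigma(T)|\geq n-1$, with equality precisely when $T$ realizes equality in Theorem~\ref{GHZSFS}; but by Theorem~\ref{mainresults1} the only equality sequences of support size $3$ are those of form (4), which have length $3$, whereas $|T|=n-3\geq 4$. Hence equality is impossible, so $|\Sigma(T)|\geq n>|\Sigma_{n}(S)|$ and the first alternative holds.

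Thus for every admissible $T$ at least one of the two alternatives holds, which proves the theorem and exhibits $S$ as a counterexample to Conjecture~\ref{conjecture of GGX}. The only genuinely delicate point is the last one: in the borderline case $|\mathrm{supp}(T)|=3$ the naive bound of Theorem~\ref{GHZSFS} gives merely $|\Sigma(T)|\geq n-1=|\Sigma_{n}(S)|$, so the argument must upgrade this to a strict inequality. It is exactly the structural classification of Theorem~\ref{mainresults1}—together with the hypothesis $n\geq 7$, which guarantees $|T|=n-3\geq 4$ and so rules out form (4)—that excludes equality and makes the counterexample work.
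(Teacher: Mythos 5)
Your proposal is correct and follows essentially the same route as the paper: compute $\Sigma_n(S)=G\setminus\{0\}$, apply Theorem~\ref{GHZSFS} to get $|\Sigma(T)|\geq n-1$ when $|\mathrm{supp}(T)|\geq 3$, and invoke the classification of Theorem~\ref{mainresults1} to rule out equality in the borderline case $|\mathrm{supp}(T)|=3$. The only (harmless) difference is how form~(4) is excluded: you use the length mismatch $|T|=n-3\geq 4>3$, while the paper uses that $n$ odd forbids the condition $2g_2=0$; both are valid.
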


By Theorem~\ref{mainresults2}, we obtain that Conjecture~\ref{conjecture of GGX} does not always hold.

\begin{theorem}\label{mainresults3}
Let $G$ be a finite abelian group and $S$ a sequence over $G$ of length $|S| \geq k+1$, where $k \geq 1$ is a positive integer. Suppose $0 \not\in \Sigma_{\geq k}(S)$ and $0\not\in \mathrm{supp}(S)$, then  $|\Sigma_{\geq k}(S)|\geq |S|-k+|\mathrm{supp}(S)|$.
\end{theorem}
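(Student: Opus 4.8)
The plan is to build a strictly decreasing chain of truncated sumsets and to count the elements gained at each step. Write $n=|S|$ and $s=|\mathrm{supp}(S)|$, and for $k\le \ell\le n$ put $A_\ell=\Sigma_{\ge \ell}(S)$, so that $A_n\subseteq A_{n-1}\subseteq\cdots\subseteq A_k=\Sigma_{\ge k}(S)$. The two top levels are immediate: $A_n=\{\sigma(S)\}$, while $A_{n-1}=\{\sigma(S)\}\cup\Sigma_{n-1}(S)$ with $\Sigma_{n-1}(S)=\{\sigma(S)-g:g\in\mathrm{supp}(S)\}$ consisting of exactly $s$ elements, none equal to $\sigma(S)$ because $0\notin\mathrm{supp}(S)$; hence $|A_{n-1}|=1+s$ (this uses $n-1\ge k$, i.e.\ the hypothesis $|S|\ge k+1$). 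If I can show that each further step gains at least one element, that is
\[
\Sigma_\ell(S)\not\subseteq \Sigma_{\ge \ell+1}(S)\qquad\text{for every }\ell\in[k,n-2],
\]
then telescoping yields
\[
|\Sigma_{\ge k}(S)|=|A_k|\ge |A_{n-1}|+(n-1-k)=(1+s)+(n-1-k)=|S|-k+|\mathrm{supp}(S)|,
\]
which is exactly the claim. As a sanity check, for $k=1$ this chain reproduces the bound of Theorem~\ref{GHZSFS}, and there $0\notin\mathrm{supp}(S)$ is automatic.

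Everything thus reduces to a ``no-gap'' statement, which I regard as the heart of the matter: for each $\ell\in[k,n-2]$ there is a length-$\ell$ subsequence whose sum is not the sum of any strictly longer subsequence. It is convenient to encode this through the maximal representation length $L(x)=\max\{|T|:T\text{ a subsequence of }S,\ \sigma(T)=x\}$. Indeed, $\Sigma_\ell(S)\not\subseteq\Sigma_{\ge\ell+1}(S)$ holds precisely when $L$ attains the value $\ell$. Since $L(\sigma(S))=n$ and $L(\sigma(S)-g)=n-1$ for $g\in\mathrm{supp}(S)$, the assertion is that the image of $L$ contains the whole interval $[k,n]$, i.e.\ that $L$ has no gaps down to level $k$.

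To prove there is no gap, suppose $\ell\in[k,n-2]$ is the largest value in that range not attained by $L$; then $\ell+1$ is attained, so fix $x$ and a subsequence $U$ with $|U|=\ell+1$, $\sigma(U)=x$, and with no longer subsequence summing to $x$. For each term $a$ occurring in $U$, deleting one copy of $a$ from $U$ gives a length-$\ell$ subsequence of sum $x-a$, so $L(x-a)\ge\ell$; since $\ell$ is a gap, in fact $L(x-a)\ge \ell+1$, witnessed by some subsequence $W$ with $\sigma(W)=x-a$ and $|W|\ge\ell+1$. If $W$ does not already use every copy of $a$ in $S$ (that is, $\mathsf{v}_a(W)<\mathsf{v}_a(S)$, where $\mathsf{v}_a$ denotes multiplicity), then $W\cdot a$ is a subsequence of $S$ of length $\ge\ell+2$ with sum $x$, contradicting the maximality of $U$. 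Hence a gap can survive only in the degenerate configuration where, for every term $a$ of $U$, every long representation $W$ of $x-a$ exhausts all copies of $a$.

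The main obstacle is to exclude this full-multiplicity configuration, and this is precisely where both remaining hypotheses must enter. The plan is to analyze the competing representations more carefully: since the configuration forces, for every term $a$ of $U$, that all copies of $a$ already lie in each maximal representation $W$ of $x-a$, the subsequences $U$ and the various $W$ overlap heavily, and their pairwise differences (after cancelling the common part) are coprime subsequences of $S$ with prescribed sums and controlled lengths. I would aim to show that this rigidity, together with $|U|\ge \ell+1\ge k+1$ and $0\notin\mathrm{supp}(S)$, cannot persist, by assembling some combination of these differences into a subsequence of length at least $k$ and sum $0$, contradicting $0\notin\Sigma_{\ge k}(S)$. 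Making this extraction precise --- turning the impossibility of lengthening a maximal representation into an honest long zero-sum --- is the step I expect to be genuinely delicate; by contrast, the chain and the count at the top two levels are routine once the no-gap lemma is in place.
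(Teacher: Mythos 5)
Your reduction of the theorem to the ``no--gap'' statement is sound as far as it goes: the count at the two top levels is correct, and the telescoping is valid, since elements with distinct values of $L$ are distinct and an element with $L(x)=\ell\le |S|-2$ lies outside $A_{n-1}$. So the claim that $\Sigma_\ell(S)\not\subseteq\Sigma_{\ge\ell+1}(S)$ for every $\ell\in[k,|S|-2]$ would indeed imply the theorem. The problem is that this claim --- which carries the entire content of the theorem --- is not proved. What you actually establish is only that a gap at the largest unattained level $\ell$ forces the rigid configuration in which, for every term $a$ of a maximal representation $U$ of some $x$ with $L(x)=\ell+1$, every representation of $x-a$ of length $\ge\ell+1$ contains all $\mathsf v_a(S)$ copies of $a$. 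At that point you say you ``would aim to show'' the configuration cannot persist by assembling a zero-sum subsequence of length $\ge k$; no argument is given, and the extraction is not routine. What the configuration hands you are pairs of subsequences of $S$ (e.g.\ $W_a$ and $Ua^{-1}$) with equal sums and different lengths; after cancelling their common part you obtain two \emph{disjoint} nonempty subsequences with equal sums, and that does not by itself produce a zero-sum subsequence of $S$, let alone one of length $\ge k$. Note also that the no--gap statement is genuinely false without the hypothesis $0\notin\Sigma_{\ge k}(S)$: for $S=g^{|S|}$ with $\ord(g)=m$ small, every level below $|S|-m+1$ is a gap. So that hypothesis must enter the argument in an essential, quantitative way, and your sketch does not show how it does.

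For comparison, the paper proves the theorem by induction on $r=|S|-k$, comparing $\Sigma_{\ge k}(S)$ with $\Sigma_{\ge k}(Sg^{-1})$ for each $g\in\supp(S)$ and showing that in a putative minimal counterexample every translate $\Sigma_{\ge k}(S)+g$ adds at most one new element; it then needs Pixton's lemma, Yuan's lemma, Olson's lemma on arithmetic progressions, and a final analysis of the $2$-torsion part of $S$ to force $|G\setminus\Sigma_{\ge k}(S)|=1$ and reach a contradiction with $0\notin\Sigma_{\ge k}(S)$. The amount of structure theory required there is a strong indication that the step you have deferred is not a ``delicate but doable'' extraction but the actual heart of the proof, and it remains entirely open in your proposal.
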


By using Theorem~\ref{mainresults3}, we can give an alternative proof of Theorem~\ref{GGXNZSFS}. If $|S|-k=1$, we have $|\Sigma_{\geq k}(S)|= |S|-k+|\mathrm{supp}(S)|$ (see the proof of Theorem~\ref{mainresults3}), so the lower bound is sharp in  Theorem~\ref{mainresults3}.

The rest of the paper is organized in the following way. Section 2 provides the basic notation and some preliminary results. In Section 3, we prove Theorems~\ref{mainresults1} and \ref{mainresults2}. In Section 4, we prove Theorems~\ref{mainresults3} and \ref{GGXNZSFS}. In the last section, we provide some concluding remarks.

\section{Notation and Preliminary results}

\subsection{Notation}

Let $C_n$ denote the cyclic group of $n$ elements. Every non-trivial finite abelian group $G$ can be written in the form $G=C_{n_1}\oplus \ldots \oplus C_{n_r}$ with $1 < n_1 \mid \ldots \mid n_r$. We consider sequences over $G$ as elements in the free abelian monoid with basis $G$. So a sequence $S$ over $G$ can be written in the form
\begin{equation*}
S=g_1 \cdot  \ldots \cdot g_\ell=\prod_{g \in G} g^{\mathsf v_g (S)},
\end{equation*}
where $\mathsf v_g(S) \in \mathbb{N}\cup\{0\}$ denotes the {\it multiplicity} of $g$ in $S$. We call $|S|= \ell = \sum_{g\in G}\mathsf v_g(S) \in \mathbb{N}\cup\{0\}$ the {\it length} of $S$,  $\sigma(S)=\sum_{i=1}^{\ell}g_i = \sum_{g\in G}\mathsf v_g(S)g\in G $  the {\it sum } of $S$, and  $\mathrm{supp}(S)=\{g \in G \mid \mathsf v_g(S) >0\}$ the {\it support} of $S$. Let $\mathsf h(S)=\max_{g\in G} \mathsf v_g(S)$ denote the {\it maximal occurrence} of elements in $S$.

A sequence $T$ is called a {\it subsequence} of $S$ if $\mathsf v_g (T) \le \mathsf v_g (S)$ for all $g \in G$. When $T$ is a subsequence of $S$, let $ST^{-1}$ denote the subsequence of $S$ with $T$ deleted from $S$. If $S_1$ and $S_2$ are two sequences over $G$,  let $S_1S_2$ denote the sequence satisfying that $\mathsf v_g(S_1S_2)= \mathsf v_g(S_1) + \mathsf v_g(S_2)$ for all $g \in G$.

Let $S=g_1 \cdot  \ldots \cdot g_\ell$ be a sequence over a finite abelian group $G$. For an element $g\in G$, let $g+S$ denote the sequence $(g+g_1) \cdot  \ldots \cdot (g+g_\ell)$ over $G$. Suppose $A$ and $B$ are two subsets of $G$. Let $A+B=\{a+b \mid a\in A, b\in B\}$. If $A=\{a\}$, we denote $A+B$ by $a+B$ for short. Let $A\setminus B=\{a \mid a\in A, a\not\in B\}$.

\subsection{Preliminary results}
We need a series of preliminary results. The first result is due to Bovey, Erd\H{o}s, and Niven \cite{BEN75}, and it also can be found in \cite[Theorem 5.3.1]{GH06}.

\begin{lemma}\label{lemma of BEN}
Let $G$ be a finite abelian group and $S$ a zero-sum free sequence over $G$. Suppose $S_1, S_2, \ldots, S_t$ are disjoint subsequences of $S$. Then
\begin{equation*}
|\Sigma(S)|\geq |\Sigma(S_1)|+|\Sigma(S_2)|+\cdots+|\Sigma(S_t)|.
\end{equation*}
\end{lemma}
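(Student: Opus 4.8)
The plan is to reduce the statement to the case $t=2$ by induction, and then to settle that case by exhibiting $|\Sigma(S_1)|+|\Sigma(S_2)|$ distinct subsequence sums of $S_1S_2$ explicitly, using zero-sum-freeness to guarantee they are distinct.

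First I would reduce to the situation $S=S_1S_2\cdots S_t$. Since the $S_i$ are disjoint subsequences of $S$, their product $S_0=S_1\cdots S_t$ is a subsequence of $S$, hence zero-sum free, and $\Sigma(S_0)\subseteq\Sigma(S)$, so $|\Sigma(S)|\ge|\Sigma(S_0)|$; thus it suffices to bound $|\Sigma(S_0)|$, that is, to assume $S=S_1\cdots S_t$ from the outset. Next I would induct on $t$. For the inductive step I set $S'=S_1\cdots S_{t-1}$, which is again zero-sum free, and write $S=S'\cdot S_t$ as a product of two disjoint subsequences; the two-summand case gives $|\Sigma(S)|\ge|\Sigma(S')|+|\Sigma(S_t)|$, while the induction hypothesis gives $|\Sigma(S')|\ge\sum_{i=1}^{t-1}|\Sigma(S_i)|$, and combining the two yields the claim. (Any empty $S_i$ has $\Sigma(S_i)=\emptyset$ and may be discarded.)

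The heart of the argument is therefore the case of two disjoint subsequences $S_1,S_2$ with $S_1S_2$ zero-sum free, where I want $|\Sigma(S_1S_2)|\ge|\Sigma(S_1)|+|\Sigma(S_2)|$. Let $m=\sigma(S_1)$ be the sum of all of $S_1$. I claim that $\Sigma(S_1)$ and $m+\Sigma(S_2)$ are disjoint subsets of $\Sigma(S_1S_2)$. Both are contained in $\Sigma(S_1S_2)$: a subsequence of $S_1$ is a subsequence of $S_1S_2$, and for a subsequence $T_2$ of $S_2$ the subsequence $S_1T_2$ of $S_1S_2$ has sum $m+\sigma(T_2)$. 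For disjointness, suppose $\sigma(T_1)=m+\sigma(T_2)$ with $T_1$ a nonempty subsequence of $S_1$ and $T_2$ a nonempty subsequence of $S_2$. Then $\sigma(S_1T_1^{-1})+\sigma(T_2)=\sigma(S_1)-\sigma(T_1)+\sigma(T_2)=0$, so $(S_1T_1^{-1})\cdot T_2$ is a zero-sum subsequence of $S_1S_2$; it is nonempty because $T_2$ is nonempty, contradicting zero-sum-freeness. Hence $\Sigma(S_1)\cap(m+\Sigma(S_2))=\emptyset$, and since $|m+\Sigma(S_2)|=|\Sigma(S_2)|$ we obtain $|\Sigma(S_1S_2)|\ge|\Sigma(S_1)|+|\Sigma(S_2)|$.

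The only genuine obstacle is choosing the right translate in the two-summand case: the naive unions $\Sigma(S_1)\cup\Sigma(S_2)$ or $\Sigma(S_1)\cup(\Sigma(S_1)+\Sigma(S_2))$ need not be large enough, and a general sumset bound such as Kneser's theorem would have to contend with a possibly nontrivial stabilizer. Translating $\Sigma(S_2)$ by the full sum $m=\sigma(S_1)$ is exactly what converts a hypothetical common value $\sigma(T_1)=m+\sigma(T_2)$ into the nonempty zero-sum subsequence $(S_1T_1^{-1})\cdot T_2$, so zero-sum-freeness forces the disjointness for free and sidesteps any additive-combinatorics input.
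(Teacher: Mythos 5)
Your proof is correct and takes essentially the same route as the source the paper cites: Lemma~\ref{lemma of BEN} is stated without proof, referring to Bovey--Erd\H{o}s--Niven \cite{BEN75} and \cite[Theorem 5.3.1]{GH06}, and the classical argument there is exactly your reduction to two blocks followed by exhibiting the disjoint union $\Sigma(S_1)\cup\left(\sigma(S_1)+\Sigma(S_2)\right)\subseteq \Sigma(S_1S_2)$, where a coincidence $\sigma(T_1)=\sigma(S_1)+\sigma(T_2)$ would produce the nonempty zero-sum subsequence $(S_1T_1^{-1})\cdot T_2$. All steps, including the handling of empty blocks and the induction on $t$, check out.
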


\begin{lemma}\cite[Proposition 5.3.2]{GH06}\label{lemma of GH}
Let $G$ be a finite abelian group and $S$ a zero-sum free subset of $G$.
\begin{itemize}
\item[1.] If $|S|=1$ or $2$, then $|\Sigma(S)|=2|S|-1.$
\item[2.] If $|S|=3$, then
        \begin{itemize}
        \item[] either $|\Sigma(S)|\geq 6=2|S|$
        \item[] or $S=g_1\cdot g_2 \cdot (g_1+g_2)$, where $g_1, g_2\in G\setminus\{0\}$, $2g_2=0$, $2g_1\neq 0$ and $g_2+2g_1\neq 0$. In this case, $\Sigma(S)=\{g_1, g_2, g_1+g_2, 2g_1, 2g_1+g_2\}$ and $|\Sigma(S)|=5=2|S|-1$.
        \end{itemize}
\item[3.] If $|S|\geq 4$, then $|\Sigma(S)|\geq 2|S|.$
\end{itemize}
\end{lemma}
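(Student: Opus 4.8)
The plan is to induct on $|S|$, disposing of the short lengths by explicit computation and reducing the general bound to a single-element ``peeling'' estimate.

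First I would handle $|S|\le 2$ directly. For $|S|=1$ we have $\Sigma(S)=\{g_1\}$, and for $|S|=2$ we have $\Sigma(S)=\{g_1,g_2,g_1+g_2\}$, whose three members are pairwise distinct because $S$ is zero-sum free (so $g_1,g_2\neq 0$ and $g_1+g_2\neq 0$) and $g_1\neq g_2$; this gives $|\Sigma(S)|=2|S|-1$. For $|S|=3$, write $S=g_1\cdot g_2\cdot g_3$ and list the seven candidate sums $g_1,g_2,g_3,g_1+g_2,g_1+g_3,g_2+g_3$ and $\sigma(S)$. Zero-sum freeness rules out all coincidences except the ``complementary'' ones: the three singletons are distinct, the three $2$-sums are distinct, no $2$-sum equals $\sigma(S)$ (that would force the missing element to be $0$), and no singleton equals $\sigma(S)$ (same reason). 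The only surviving collisions are of the form $g_i=\sigma(S)-g_i$, i.e.\ $2g_i=\sigma(S)$; summing over $i$ shows these cannot hold for all three indices (else $\sigma(S)=0$), so at most two collisions occur and $|\Sigma(S)|\ge 5$. Tracing the case of exactly two collisions, say $2g_1=2g_2=\sigma(S)$, forces $2g_3=0$ and $g_1=g_2+g_3$, and after relabelling this is precisely the exceptional family $S=g_1\cdot g_2\cdot(g_1+g_2)$ with $2g_2=0$, where the remaining zero-sum-free conditions (on $\sigma(S)$ and on $g_1\cdot(g_1+g_2)$) read $2g_1\neq 0$ and $2g_1+g_2\neq 0$; a short computation gives $\Sigma(S)=\{g_1,g_2,g_1+g_2,2g_1,2g_1+g_2\}$ of size $5$. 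In every other case $|\Sigma(S)|\ge 6$.

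For $|S|\ge 4$ I would argue by strong induction, using the peeling identity
\[
\Sigma(S)=\Sigma(Sg^{-1})\cup\bigl(g+\Sigma^{*}(Sg^{-1})\bigr),\qquad \Sigma^{*}(T):=\Sigma(T)\cup\{0\},
\]
valid for any $g\in\mathrm{supp}(S)$. The total sum $\sigma(S)=g+\sigma(Sg^{-1})$ always lies in the second set but never in $\Sigma(Sg^{-1})$ (otherwise the deleted subsequence would be a nonempty zero-sum), so $|\Sigma(S)|\ge|\Sigma(Sg^{-1})|+1$. Since $\mathrm{supp}(S)=S$ for a set, Theorem~\ref{GHZSFS} already supplies the baseline $|\Sigma(S)|\ge 2|S|-1$, so for $|S|\ge 4$ it only remains to exclude equality. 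Suppose $|\Sigma(S)|=2|S|-1$. For $|S|\ge 5$ the inductive hypothesis gives $|\Sigma(Sg^{-1})|\ge 2(|S|-1)$, whence the increment $|\Sigma(S)|-|\Sigma(Sg^{-1})|$ equals $1$ for \emph{every} $g\in S$. Writing $g+\Sigma^{*}(Sg^{-1})=\{g\}\cup\bigl(g+\Sigma(Sg^{-1})\bigr)$, increment $1$ is equivalent to $g\in\Sigma(Sg^{-1})$ together with $\bigl|(g+\Sigma(Sg^{-1}))\setminus\Sigma(Sg^{-1})\bigr|=1$; that is, translation by $g$ almost stabilises $\Sigma(Sg^{-1})$.

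The main obstacle is precisely this near-stabiliser regime. Here I would invoke a Kneser-type argument: $|(g+X)\setminus X|=1$ forces the stabiliser of $X\cup(g+X)$ to be trivial and $X=\Sigma(Sg^{-1})$ to be highly periodic in the $g$-direction (a near-union of cosets with common difference $g$), and requiring this simultaneously for \emph{all} $g\in S$ is incompatible with $S$ being a zero-sum-free set of size $\ge 4$, yielding the desired contradiction and hence $|\Sigma(S)|\ge 2|S|$. The genuinely delicate points are (i) discharging this periodicity analysis cleanly, and (ii) the base length $|S|=4$, where the inductive hypothesis only furnishes the exceptional $3$-element sets, so one must separately rule out the degenerate possibility that all four $3$-subsets of $S$ are exceptional (which over-determines the additive relations among the elements and forces a zero-sum). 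Everything else is bookkeeping via the peeling identity, with Lemma~\ref{lemma of BEN} available to split off already-handled blocks.
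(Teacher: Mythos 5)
The paper offers no proof of this lemma to compare against --- it is quoted verbatim from \cite[Proposition 5.3.2]{GH06} --- so your attempt must stand on its own, and as written it does not. Parts 1 and 2 are correct and essentially complete: the seven candidate sums, the observation that zero-sum freeness kills every coincidence except $g_i=\sigma(S)-g_i$, the count showing at most two such collisions, and the identification of the two-collision case with the exceptional family are all sound. The problem is part 3, which is where the lemma's actual content lies. The sentence ``Here I would invoke a Kneser-type argument'' is a plan, not a proof, and the difficulty it waves at is genuine. Concretely: (i) the almost-stabilized set $X=\Sigma(Sg^{-1})$ \emph{varies with} $g$, so a Pixton-style count (Lemma~\ref{pixton 1}) does not apply directly; one must first transfer the increment-$1$ condition to the fixed set $\Sigma(S)$ and then confront the fact that the near-stabilizer regime is not vacuously ``incompatible'': in a cyclic group an arithmetic progression with difference $g$ satisfies $|(g+X)\setminus X|=1$, and progressions with a common difference are simultaneously almost-stabilized by several elements. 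Excluding this requires Olson-type rigidity (Lemma~\ref{cyclic}), a separate treatment of the extreme case $|\Sigma(S)|=|G|-1$, and subgroup-splitting via something like Lemma~\ref{m^2} --- exactly the machinery the paper deploys over several pages in its proof of Theorem~\ref{mainresults3} for the analogous near-stabilizer situation. None of this is carried out or even sketched. (ii) The base case $|S|=4$ is worse than you acknowledge: even when every $3$-subset of $S$ is non-exceptional, peeling yields only $|\Sigma(S)|\geq 6+1=7<8=2|S|$, so ruling out ``all four $3$-subsets exceptional'' does not close it; you would need the full increment-$1$ structure analysis at length $4$ as well.

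There is also a circularity you should not pass over. Your baseline $|\Sigma(S)|\geq 2|S|-1$ is imported from Theorem~\ref{GHZSFS}, but the standard proof of that theorem --- including the one in the very book this lemma is cited from, which decomposes $S$ into its multiplicity layers, applies Lemma~\ref{lemma of BEN}, and bounds the support layer by the Eggleton--Erd\H{o}s set bound $2|\supp(S)|-1$ --- rests precisely on the set-case estimate you are trying to prove. So the appeal is circular unless you supply an independent proof of the $2|S|-1$ baseline for sets, and your own tools do not: single-element peeling from parts 1--2 gives by induction only $|\Sigma(S)|\geq |\Sigma(Sg^{-1})|+1\geq 2|S|-2$, one short. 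In sum: parts 1--2 stand, but part 3 is an unproven program whose two admitted ``delicate points'' are not loose ends --- they are the theorem.
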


\begin{lemma}\label{lemma of ZSFS}
Let $G$ be a finite abelian group and $S$ a zero-sum free sequence over $G$. Then $|\Sigma(S)|\geq |S|$, and equality holds if and only if $S=g^{|S|}$ for some $g\in G$ with $\ord(G)\geq |S|+1$.
\end{lemma}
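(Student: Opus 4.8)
The plan is to establish the inequality by producing $|S|$ distinct subsequence sums through a telescoping chain of partial sums, and then to settle the equality case by induction on the length. First I would fix any ordering $S = g_1 \cdot \ldots \cdot g_\ell$ with $\ell = |S|$ and consider the partial sums $s_k = g_1 + \cdots + g_k$ for $k \in [1,\ell]$. Each $s_k = \sigma(g_1 \cdot \ldots \cdot g_k)$ lies in $\Sigma(S)$, and for $i < j$ the difference $s_j - s_i = \sigma(g_{i+1} \cdot \ldots \cdot g_j)$ is the sum of a nonempty subsequence of $S$, hence nonzero since $S$ is zero-sum free. Thus $s_1, \ldots, s_\ell$ are $\ell$ pairwise distinct elements of $\Sigma(S)$, giving $|\Sigma(S)| \ge \ell = |S|$.

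For the easy direction of the equality characterization, if $S = g^{|S|}$ with $\ord(g) \ge |S|+1$, then $\Sigma(S) = \{g, 2g, \ldots, |S|g\}$, and the hypothesis $\ord(g) \ge |S|+1$ guarantees that these $|S|$ multiples are pairwise distinct and all nonzero; so $S$ is indeed zero-sum free and $|\Sigma(S)| = |S|$.

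The substance is the converse, for which I would induct on $\ell = |S|$, the case $\ell = 1$ being immediate. Assuming $|\Sigma(S)| = \ell$, I would pick $a \in \mathrm{supp}(S)$ and write $S = a \cdot S'$, and then exploit the decomposition $\Sigma(S) = \Sigma(S') \cup \{a\} \cup (a + \Sigma(S'))$ together with $\Sigma(S') \subseteq \Sigma(S)$ and the lower bound $|\Sigma(S')| \ge \ell - 1$. First I would rule out $|\Sigma(S')| = \ell$: in that case $\Sigma(S') = \Sigma(S)$ would be invariant under translation by $a$ and would contain $a$, forcing $0 \in \langle a \rangle \subseteq \Sigma(S)$ and contradicting zero-sum freeness. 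Hence $|\Sigma(S')| = \ell - 1$, and the induction hypothesis yields $S' = b^{\ell-1}$ with $\ord(b) \ge \ell$. It then remains to force $a = b$: supposing $a \ne b$, the containment $\{b, 2b, \ldots, (\ell-1)b\} \subseteq \{a, a+b, \ldots, a+(\ell-1)b\} = \Sigma(S)$ puts $a$ in $\langle b \rangle$, and reducing everything to an interval-containment problem inside $\langle b \rangle \cong \mathbb{Z}/\ord(b)\mathbb{Z}$ leaves only the possibilities $a = 0$ or $a = b$, both excluded. Thus $S = b^\ell$, and zero-sum freeness upgrades $\ord(b) \ge \ell$ to $\ord(b) \ge \ell + 1$.

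I expect the main obstacle to be this final step: organizing the interval-containment analysis inside the cyclic subgroup $\langle b \rangle$ and cleanly separating the boundary case $\ord(b) = \ell$ (where the relevant interval becomes the whole group and swallows $0$) from $\ord(b) \ge \ell+1$. I note that the converse can alternatively be read off in one line from Theorem~\ref{GHZSFS}, since $|\Sigma(S)| = |S|$ there forces $|\mathrm{supp}(S)| \le 1$; but I would prefer the self-contained inductive argument, so that the lemma does not lean on the very bound whose sharpness it is meant to certify.
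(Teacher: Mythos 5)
Your proposal is correct, but it takes a genuinely different and considerably longer route than the paper. The paper's entire proof is a two-line appeal to Lemma~\ref{lemma of BEN} and Lemma~\ref{lemma of GH}(1): splitting $S$ into singletons gives $|\Sigma(S)|\geq |S|$ at once, and if $|\supp(S)|\geq 2$ one instead takes one two-element subset (whose $\Sigma$ has size $3$ by Lemma~\ref{lemma of GH}(1)) together with singletons to get $|\Sigma(S)|\geq 3+(|S|-2)=|S|+1$, so equality forces $|\supp(S)|=1$ and then $\ord(g)\geq|S|+1$ follows from zero-sum freeness. Your argument replaces this with a self-contained telescoping partial-sum bound plus an induction on $|S|$; both ingredients are sound. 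In the induction step, the exclusion of $|\Sigma(S')|=\ell$ via translation-invariance and the orbit $a,2a,\ldots,0$ is fine, and the final interval analysis does close: since $\{a\}\cup(a+\Sigma(S'))$ already has $\ell$ distinct elements (using $\ord(b)\geq\ell$ and $0\notin\Sigma(S')$), one gets $\Sigma(S)=\{a,a+b,\ldots,a+(\ell-1)b\}$ exactly, so $a=cb\in\langle b\rangle$; if $\ord(b)=\ell$ this set is the whole of $\langle b\rangle$ and contains $0$, a contradiction, and if $\ord(b)\geq\ell+1$ the containment $\{b,\ldots,(\ell-1)b\}\subseteq\{cb,\ldots,(c+\ell-1)b\}$ in $\mathbb{Z}/\ord(b)\mathbb{Z}$ forces $c\in\{0,1\}$, with $c=0$ excluded because $a\neq 0$. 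What your approach buys is independence from the two imported lemmas (and from Theorem~\ref{GHZSFS}, avoiding the circularity you rightly flag); what it costs is the case analysis in $\langle b\rangle$, which the paper's decomposition argument sidesteps entirely. If you wanted to shorten your equality case without importing Lemma~\ref{lemma of BEN}, note that once $\Sigma(S)=\{a,a+b,\ldots,a+(\ell-1)b\}$ you need only test which translates of that progression contain both $b$ and $(\ell-1)b$ while omitting $0$.
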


\begin{proof}
The result follows from Lemma~\ref{lemma of BEN} and Lemma~\ref{lemma of GH} (1).
\end{proof}

The following result was first proved by Pixton \cite{Pixton}.

\begin{lemma}\label{pixton 1}\cite[Lemma 2.9]{Pixton}
Let $G$ be a finite abelian group and let $X\subset G\setminus\{0\}$ be a generating  set for $G$. Suppose $Y$ is a nonempty proper subset of $G$. Then
\begin{equation*}
\sum_{g\in X} |(Y+g)\setminus Y|\geq |X|.
\end{equation*}
\end{lemma}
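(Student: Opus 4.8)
The plan is to prove the inequality directly, without induction, by separating the generators in $X$ into those that ``move'' $Y$ and those that fix it, and then exploiting the subgroup generated by the latter. First I would write $S=\{g\in X : Y+g=Y\}$ for the set of generators fixing $Y$, and set $K=\langle S\rangle$ (with the convention $\langle\emptyset\rangle=\{0\}$). For every $g\in S$ we have $Y+g=Y$, and translating by $-g$ gives $Y-g=Y$; hence $Y$ is invariant under the whole subgroup $K$, i.e. $Y$ is a union of cosets of $K$.

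The key observation is that each generator in $X\setminus S$ then contributes a full coset's worth to the sum. Indeed, if $g\in X\setminus S$ then $(Y+g)\setminus Y\neq\emptyset$; since both $Y$ and $Y+g$ are unions of $K$-cosets, so is their difference $(Y+g)\setminus Y$, whence
\[
|(Y+g)\setminus Y|\geq |K|\qquad\text{for every } g\in X\setminus S .
\]
Summing over all of $X$ (the terms coming from $S$ vanish) gives $\sum_{g\in X}|(Y+g)\setminus Y|\geq |K|\cdot|X\setminus S|$.

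It then remains only the elementary arithmetic step of checking $|K|\cdot|X\setminus S|\geq|X|$, where two observations combine. First, since $X$ generates $G$ while $Y$ is nonempty and proper, not every generator can fix $Y$ (otherwise $Y$ would be invariant under $\langle X\rangle=G$, forcing $Y=\emptyset$ or $Y=G$); thus $|X\setminus S|\geq 1$. Second, $S$ consists of nonzero elements lying in $K$, so $S\subseteq K\setminus\{0\}$ and therefore $|S|\leq |K|-1$. Combining these,
\[
|K|\cdot|X\setminus S| = |X\setminus S| + (|K|-1)\,|X\setminus S| \geq |X\setminus S| + (|K|-1) \geq |X\setminus S| + |S| = |X| ,
\]
which is exactly the desired bound.

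The only place requiring genuine insight is the choice of how to group the generators, and this is the main obstacle I would flag: a naive attempt to bound $|(Y+g)\setminus Y|$ below for each $g$ individually fails, since a single generator can fix $Y$ entirely and contribute zero. The crucial point is that the generators fixing $Y$ do not merely vanish from the sum — they force $Y$ to be coarse (a union of $K$-cosets), which in turn inflates the contribution of every remaining generator by the factor $|K|$, and the inequality $|S|\le |K|-1$ shows that this inflation exactly pays for the lost generators. Once this trade-off is isolated, the remaining steps are routine.
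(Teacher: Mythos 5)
Your proof is correct, and there is nothing in the paper to compare it against: the paper quotes this lemma from Pixton's article without reproducing a proof, so you have supplied an argument the paper omits. Checking your steps: the implication $Y+g\neq Y \Rightarrow (Y+g)\setminus Y\neq\emptyset$ holds because translation preserves cardinality in a finite group, so $Y+g\subseteq Y$ would force equality; invariance of $Y$ under each $g\in S$ propagates to all of $K=\langle S\rangle$ (in a finite group, invariance under $+g$ already gives invariance under $-g$, since $+g$ has finite order), so $Y$, $Y+g$, and hence $(Y+g)\setminus Y$ are unions of $K$-cosets, giving the bound $|(Y+g)\setminus Y|\geq |K|$ for $g\in X\setminus S$; the set $X\setminus S$ is nonempty since otherwise $Y$ would be invariant under $\langle X\rangle=G$, contradicting that $Y$ is nonempty and proper; and $S\subseteq K\setminus\{0\}$ gives $|S|\leq |K|-1$, after which the chain
\begin{equation*}
|K|\cdot|X\setminus S| \;\geq\; |X\setminus S|+(|K|-1) \;\geq\; |X\setminus S|+|S| \;=\; |X|
\end{equation*}
closes the argument (the degenerate case $S=\emptyset$, $K=\{0\}$ is also covered, as then every term contributes at least $1$). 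Your trade-off observation --- that generators fixing $Y$ are not merely lost but coarsen $Y$ enough that each moving generator pays for them, via $|S|\leq|K|-1$ --- is exactly the insight the lemma requires, and your argument is self-contained, elementary, and in the same spirit as Pixton's original stabilizer-subgroup approach. No gaps.
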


The following result is due to Yuan \cite{Y092}.

\begin{lemma}\cite[Lemma 2.9]{Y092}\label{m^2}
Let $m$ be a positive integer and $G$  a finite abelian group, and let $X\subset G\setminus \{0\}$ be a generating set for $G$. Suppose that $A \subset G$ satisfies $|(A+g)\setminus A|\leq m$ for all $g\in X$ and there exists a proper subset $Y \subset X$ such that $|\langle Y\rangle|>m$ and $|G/\langle Y\rangle|>m$. Then $\min\{|A|, |G\setminus A|\}\leq m^2.$
\end{lemma}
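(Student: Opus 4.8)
The plan is to exploit the subgroup $H=\langle Y\rangle$ and to analyse $A$ one coset of $H$ at a time, using Pixton's lemma (Lemma~\ref{pixton 1}) in the two ``directions'' measured by the hypotheses $|H|>m$ and $|G/H|>m$. Since $|(A+g)\setminus A|=|(A^{c}+g)\setminus A^{c}|$ for every $g$, where $A^{c}=G\setminus A$, I may assume $|A|\le|A^{c}|$ and aim to prove $|A|\le m^{2}$. For a coset $C$ of $H$ write $A_{C}=A\cap C$ and $a_{C}=|A_{C}|\in[0,|H|]$; call $C$ \emph{full} if $a_{C}=|H|$, \emph{empty} if $a_{C}=0$, and \emph{mixed} otherwise. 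First I would bound the number of mixed cosets. For $g\in Y\subseteq H$ translation by $g$ fixes each coset of $H$, so $|(A+g)\setminus A|=\sum_{C}|(A_{C}+g)\setminus A_{C}|$; summing over $g\in Y$ and interchanging the order of summation gives $\sum_{C}\sum_{g\in Y}|(A_{C}+g)\setminus A_{C}|\le m|Y|$. Identifying each coset with $H$ by a translation and applying Lemma~\ref{pixton 1} inside $H$ (with generating set $Y$) to each mixed coset, on which $A_{C}$ is a nonempty proper subset of $H$, yields $\sum_{g\in Y}|(A_{C}+g)\setminus A_{C}|\ge|Y|$. Hence the number of mixed cosets is at most $m$.

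Next I would control how the $a_{C}$ behave across cosets, using $|H|>m$. Fix $g\in X$ with $\bar g=\pi(g)\neq0$ in $G/H$ and partition $G/H$ into orbits of translation by $\bar g$. Along one such orbit $C,C+g,C+2g,\dots$ one has $|(A_{C}+g)\setminus A_{C+g}|\ge(a_{C}-a_{C+g})^{+}$, and since the orbit is a cycle the cyclic sum of these quantities is at least the range $\max a_{C}-\min a_{C}$ taken over the orbit. Summing over all orbits shows $\sum_{\text{orbits}}(\max a_{C}-\min a_{C})\le|(A+g)\setminus A|\le m$. As $m<|H|$, no orbit of any $\bar g$ can contain both a full and an empty coset, and within each orbit the values $a_{C}$ differ by at most $m$. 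Combined with $|H|>m$, this forces every mixed coset to be either \emph{almost full} ($|H|-a_{C}\le m$) or \emph{almost empty} ($a_{C}\le m$), and it keeps the full cosets separated from the empty ones by the mixed region. Passing to $G/H$, whose order exceeds $m$, I would use Pixton's lemma applied to the nonempty proper set of cosets meeting $A$, together with the separation just obtained, to locate the minority side.

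Finally I would combine the two bounds. If $A$ is the smaller side, I claim it contains no full coset: otherwise the full cosets, separated from the empty cosets only through the at most $m$ mixed cosets, would (because $|G/H|>m$ leaves room for many empty cosets on the minority side) force $|A|>|A^{c}|$, contradicting the choice of $A$. Once $A$ has no full coset, $A$ is contained in the union of the mixed cosets, of which there are at most $m$; each of these is almost empty and so contributes at most $m$ elements. Therefore $|A|\le m\cdot m=m^{2}$, as required. The main obstacle is the middle step: turning the per-direction variation estimate $\sum_{\text{orbits}}(\max a_{C}-\min a_{C})\le m$ into the global statement that, for the minority side, full and empty cosets cannot coexist and the mixed cosets are all almost empty. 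This is exactly where both hypotheses $|H|>m$ and $|G/H|>m$ are needed, and making the separation argument in $G/H$ quantitative, rather than merely topological, is the crux of the proof.
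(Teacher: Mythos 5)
First, a point of comparison: the paper does not prove this lemma at all; it is quoted from Yuan \cite{Y092} (Lemma 2.9 there), so there is no in-paper proof to measure your argument against and your proposal has to stand on its own.

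On its own terms, your opening moves are sound: the reduction to $|A|\le|G\setminus A|$ via $|(A+g)\setminus A|=|((G\setminus A)+g)\setminus(G\setminus A)|$ is correct, and the bound of at most $m$ mixed cosets of $H=\langle Y\rangle$ (Pixton's lemma applied inside each mixed coset with generating set $Y$, summed over $g\in Y$) is correct and genuinely useful. The gap is exactly where you flag it, and as written it is fatal. The per-generator variation estimate $\sum_{\text{orbits of }\bar g}(\max a_C-\min a_C)\le m$ only constrains a mixed coset $C$ through cosets lying in the \emph{same} $\langle\bar g\rangle$-orbit; if for every $g\in X$ the orbit $C+\langle\bar g\rangle$ contains neither a full nor an empty coset, nothing forces $a_C\le m$ or $a_C\ge|H|-m$, and chaining estimates across different generators accumulates an error of $m$ per generator, i.e.\ $m|X|$ rather than $m$. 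So ``every mixed coset is almost full or almost empty'' is unproved. Likewise, ``the minority side contains no full coset'' is asserted through a qualitative separation picture ($|G/H|>m$ ``leaves room for many empty cosets'') without any inequality being derived; and what your final count actually requires is stronger still, namely that every mixed coset is almost \emph{empty}: a single almost-full mixed coset of size $|H|-1$ would already destroy the bound $|A|\le m^2$, since $|H|$ may be much larger than $m$. Until these three claims --- no full cosets on the minority side, no almost-full mixed cosets there, and $a_C\le m$ for the remaining mixed cosets --- are established by explicit inequalities combining the $g\in Y$ constraints with the $g\in X\setminus Y$ constraints, the proposal is an outline rather than a proof.
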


We also need the following result due to Olson.

\begin{lemma}\cite[Lemma 1]{MO1967}\label{cyclic}
Let $m$ be a positive integer. Let $A=\{a_0+\lambda a \mid \lambda =0,1,\ldots,s\}$ be a set of $G=C_m$, where $a_0, a \in G$, $\ord(a)=m$, and $1\leq s\leq m-3$. If $A=\{b_0+\lambda b \mid \lambda =0,1,\ldots,s\}$ for some $b_0, b\in G$, then $a=\pm b$.
\end{lemma}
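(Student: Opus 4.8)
The plan is to reduce to the model case $G=\mathbb{Z}/m\mathbb{Z}$ with $a=1$ and then exploit the difference multiset of $A$, an invariant that does not depend on which progression description of $A$ we use. Since $\ord(a)=m$, the map $\lambda a\mapsto \lambda$ is an isomorphism $C_m\cong \mathbb{Z}/m\mathbb{Z}$, so I may assume $G=\mathbb{Z}/m\mathbb{Z}$ and $a=1$; then $A=\{c,c+1,\ldots,c+s\}$ is an interval of $s+1$ consecutive residues and the goal becomes $b\equiv \pm 1\pmod m$ (note $m\ge s+3\ge 4$, so $1\not\equiv -1$). First I would show that $b$ is a unit: the set $\{b_0+\mu b:\mu=0,\ldots,s\}$ lies in the coset $b_0+\langle b\rangle$, so all elements of $A$ are congruent modulo $d:=\gcd(b,m)$; as $A$ contains two consecutive integers (here $s\ge 1$ is used), this forces $d=1$. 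In particular $\langle b\rangle=G$, whence $\{b_0+\mu b:\mu=0,\ldots,m-1\}=G$, so the complement $A^c$ is again simultaneously an interval (of length $m-s-1$) and an arithmetic progression with common difference $b$.

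The core step is a representation-function argument in the regime $2s+1\le m$. For $g\in G$ set $r(g)=\#\{(x,y)\in A\times A: x-y=g\}$; this count is intrinsic to $A$. Reading $A$ as an interval gives $r(g)=s+1-|k|$ when $g\equiv k$ with $-s\le k\le s$, and $r(g)=0$ otherwise, because the residues $-s,\ldots,s$ are pairwise distinct once $2s+1\le m$. Reading $A$ as the progression $\{b_0+\mu b\}$, and using that $b$ is a unit so that the residues $kb$ are likewise distinct, gives $r(kb)=s+1-|k|$. Among nonzero $g$ the maximum of $r$ equals $s$ and is attained exactly twice: the interval description identifies those two elements as $\{1,-1\}$, while the progression description identifies them as $\{b,-b\}$. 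Hence $\{1,-1\}=\{b,-b\}$ and $b\equiv\pm1$, as desired.

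It remains to drop the assumption $2s+1\le m$, and this is where the main obstacle lies: when $2s+1>m$ the residues $-s,\ldots,s$ wrap around and the clean formula for $r$ breaks down. I would handle this by passing to the complement. Writing $s'=m-s-2$, the set $A^c$ is an interval of length $s'+1$ and an arithmetic progression with common difference $b$, and the hypothesis $1\le s\le m-3$ is equivalent to $1\le s'\le m-3$, so $A^c$ satisfies exactly the same hypotheses as $A$. A short computation shows $2s'+1=2m-2s-3$, so $2s+1>m$ forces $2s'+1\le m$; thus the representation-function argument applies verbatim to $A^c$ and again yields $b\equiv\pm1$. Since the ranges $2s+1\le m$ and $2s'+1\le m$ together cover all admissible $s$, the proof is complete. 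The hypothesis $s\le m-3$ enters precisely to ensure $s'\ge 1$, so that the complement is a nondegenerate progression on which the argument can be run.
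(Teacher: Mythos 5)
Your proof is correct, but note that the paper itself contains no proof of this lemma to compare against: it is quoted verbatim as Lemma~1 of Mann and Olson \cite{MO1967} and used as a black box. Judged on its own merits, your argument is complete. The reduction to $G=\mathbb{Z}/m\mathbb{Z}$, $a=1$ is legitimate since $\lambda\mapsto\lambda a$ is an isomorphism; the unit step is right ($A$ contains two consecutive residues because $s\geq 1$ and $s+1\leq m-2$ rules out wraparound, forcing $\gcd(b,m)=1$); and your representation function is just the intrinsic quantity $r(g)=|A\cap(A+g)|$, which under $2s+1\leq m$ equals $s+1-|k|$ at $g\equiv k$, $|k|\leq s$, in both readings of $A$, so the set of nonzero maximizers is simultaneously $\{1,-1\}$ and $\{b,-b\}$ (the first set has two elements since $m\geq s+3\geq 4$, which also forces $b\neq -b$), giving $b\equiv\pm 1$. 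The complementation step is the right device for $2s+1>m$: you verify correctly that $A^c$ is again both an interval and a $b$-progression with parameter $s'=m-s-2$ satisfying $2s'+1\leq m$, and your closing remark is a good sanity check --- the hypothesis $s\leq m-3$ enters exactly to make $s'\geq 1$, and indeed the lemma fails at $s=m-2$, where the complement of a single point is a progression with any unit difference. This is essentially the classical intersection-counting route one would use to prove the Mann--Olson lemma, so while it is not ``the paper's approach'' (the paper has none), it is a natural and economical self-contained substitute for the citation.
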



\begin{lemma}\label{sumofelementsoforder2}
Let $G=C_2^n$, where $n\geq 2$. Then $G \setminus \{0\}$ is a zero-sum set.
\end{lemma}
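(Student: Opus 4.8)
The plan is to prove the statement by a direct coordinate-wise computation of the sum $\sigma(G \setminus \{0\})$. Recall that calling $G \setminus \{0\}$ a zero-sum set means precisely that $\sigma(G \setminus \{0\}) = 0$, where the sum is taken over all nonzero elements of $G$, each counted once. Since the zero element contributes nothing to a sum, we have $\sigma(G \setminus \{0\}) = \sigma(G) = \sum_{g \in G} g$, so it suffices to show that the sum of \emph{all} elements of $G = C_2^n$ vanishes.

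To do this, I would identify $G$ with $(\mathbb{Z}/2\mathbb{Z})^n$ and compute the sum componentwise. Write $g = (g^{(1)}, \ldots, g^{(n)})$ for $g \in G$. Then $\sigma(G) = \sum_{g \in G} g$ has $i$-th coordinate equal to $\sum_{g \in G} g^{(i)}$, computed in $\mathbb{Z}/2\mathbb{Z}$. For each fixed coordinate $i$, exactly half of the $2^n$ elements of $G$ have $g^{(i)} = 1$; that is, there are $2^{n-1}$ such elements. Hence the $i$-th coordinate of $\sigma(G)$ equals $2^{n-1} \cdot 1 \bmod 2$.

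The key observation is then a parity check: since $n \geq 2$, the exponent satisfies $n - 1 \geq 1$, so $2^{n-1}$ is even and $2^{n-1} \equiv 0 \pmod 2$. Therefore every coordinate of $\sigma(G)$ is $0$, and we conclude $\sigma(G) = 0$, as desired.

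I do not expect any genuine obstacle here; the argument is a short direct computation. The only point requiring care is the hypothesis $n \geq 2$, which is genuinely needed: for $n = 1$ one has $C_2 \setminus \{0\} = \{1\}$ with sum $1 \neq 0$, and the parity argument fails precisely because $2^{0} = 1$ is odd. Alternatively, one could invoke the classical fact that in a finite abelian group the sum of all elements equals the sum of its elements of order dividing $2$, which is $0$ unless there is a unique element of order $2$; for $C_2^n$ with $n \geq 2$ there are at least three elements of order $2$, again forcing the sum to be $0$.
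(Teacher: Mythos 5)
Your proof is correct and is essentially the paper's own argument: the paper likewise reduces to $\sigma(G\setminus\{0\})=\sigma(G)$ and computes $\sigma(G)=2^{n-1}(e_1+\cdots+e_n)=0$, which is exactly your coordinate-wise count of the $2^{n-1}$ elements with a given coordinate equal to $1$. Your explicit remark that $n\geq 2$ is needed (and fails for $n=1$) is a nice touch but the substance is identical.
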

\begin{proof}
Let $e_1, e_2, \ldots, e_n$ be a basis of $C_2^n$. Then $\sigma(G\setminus \{0\})=\sigma(G)=2^{n-1}(e_1+e_2+\cdots+e_n)=0,$  and we are done.
\end{proof}

The following result was proved by Gao, Grynkiewicz, and Xia \cite{GGX2016}.
\begin{lemma}\cite[Lemma 2.5]{GGX2016}\label{Lemma 2.5}
Let $G$ be an abelian group of order $n$ and $k\leq n$ a nonnegative integer. Let $S=0^{n-k}T$ be a sequence over $G$ with $|T|\geq k $ and $\mathsf v_{0}(S)\geq \mathsf h(S)-1$. Then
\begin{center}
$\Sigma_{\geq k}(T)=\Sigma_n(S)$.
\end{center}
\end{lemma}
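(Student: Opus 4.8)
The plan is to establish the two inclusions separately. Write $t:=\mathsf v_0(T)$, so that $\mathsf v_0(S)=(n-k)+t$, and for a subsequence $R$ of $S$ let $R'$ denote the subsequence of its nonzero terms. The underlying principle is that zeros do not affect sums, so passing between $T$ and $S$ only changes how many zeros are used; the whole content is whether the nonzero part of a relevant subsequence can be kept to length at most $n$.

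For $\Sigma_n(S)\subseteq\Sigma_{\geq k}(T)$, take $g=\sigma(W)$ with $W\mid S$ and $|W|=n$. Since $W$ uses at most $\mathsf v_0(S)=(n-k)+t$ zeros, its nonzero part $W'\mid T'$ has $\sigma(W')=g$ and $|W'|\geq n-\mathsf v_0(S)=k-t$. Appending the $t$ zeros of $T$ to $W'$ yields a subsequence of $T$ with sum $g$ and length $|W'|+t\geq k$, so $g\in\Sigma_{\geq k}(T)$. This direction needs neither $|T|\geq k$ nor the hypothesis $\mathsf v_0(S)\geq\mathsf h(S)-1$.

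For the reverse inclusion the key reduction is: $g\in\Sigma_n(S)$ as soon as $T'$ has a subsequence $W'$ with $\sigma(W')=g$ and $k-t\leq|W'|\leq n$, since one then pads $W'$ with $n-|W'|\leq\mathsf v_0(S)$ zeros of $S$. Now take $g=\sigma(V)$ with $V\mid T$, $|V|\geq k$, chosen so that $V$ contains all $t$ zeros of $T$; then $p:=|V'|=|V|-t\geq k-t$, so the lower bound is free and the only issue is $p>n$. I would repeatedly delete a short zero-sum subsequence from $V'$: while $p>n$ we have $p\geq|G|+1$, so (by the claim below) $V'$ has a nonempty zero-sum subsequence of length at most $\mathsf h(V')\leq\mathsf h(S)$. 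Here the hypothesis enters: $p>n$ forces $|V|-k=p+t-k>n+t-k=\mathsf v_0(S)\geq\mathsf h(S)-1$, hence $|V|-k\geq\mathsf h(S)\geq\mathsf h(V')$, so such a deletion removes at most $|V|-k$ terms and keeps $|V|\geq k$ (equivalently $p\geq k-t$). Iterating strictly decreases $p$ while keeping it $\geq k-t$, so it halts with $p\in[k-t,n]$, and the reduction finishes (the boundary case $k=n$ and the case $g=0$ are covered by the same argument).

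The main obstacle is the auxiliary claim that \emph{every sequence $W$ over $G$ with $|W|\geq|G|$ has a nonempty zero-sum subsequence of length at most $\mathsf h(W)$}. That the hypothesis $\mathsf v_0(S)\geq\mathsf h(S)-1$ is genuinely needed, rather than an artifact, is visible already for $T=g^{N}$ with $g$ a generator of $G=C_n$ and $N$ large: then $\Sigma_{\geq k}(T)=G$ while $\Sigma_n(0^{n-k}g^{N})=\{jg:\,k\leq j\leq n\}$, the gap being exactly the $p>n$ phenomenon. To prove the claim I would induct on $|G|$: choosing $a\in\mathrm{supp}(W)$ with $\mathsf v_a(W)=\mathsf h(W)$, if $\ord(a)\leq\mathsf h(W)$ then $a^{\ord(a)}$ already works, and otherwise I would pass to $G/\langle a\rangle$, apply the inductive hypothesis to the image sequence to obtain a zero-sum subsequence there, and lift it, spending copies of $a$ to kill the $\langle a\rangle$-component. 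Making this lift respect the length bound $\mathsf h(W)$ simultaneously is the delicate point and the crux of the whole argument; to carry it out I would look to the Bovey--Erd\H{o}s--Niven circle of results on short zero-sum subsequences \cite{BEN75}.
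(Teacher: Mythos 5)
The paper does not prove this lemma at all: it is quoted verbatim from \cite{GGX2016}, so there is no internal argument to compare yours against, and I am judging the proposal on its own. Your overall architecture is sound and is in fact the standard one. The inclusion $\Sigma_n(S)\subseteq\Sigma_{\geq k}(T)$ is handled correctly, and in the reverse direction the bookkeeping with $t=\mathsf v_0(T)$ and, crucially, the way you invoke $\mathsf v_0(S)\geq\mathsf h(S)-1$ --- namely that $p>n$ forces $|V|-k\geq\mathsf h(S)\geq\mathsf h(V')$, so deleting a zero-sum block of length at most $\mathsf h(V')$ never drops $|V'|$ below $k-t$ --- is exactly the right mechanism; the iteration terminates with $k-t\leq p\leq n$ and the padding finishes the job.

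The genuine gap is the auxiliary claim, which you rightly flag as the crux but do not prove, and whose proposed proof would not close as written. The claim --- every sequence $W$ over $G$ with $|W|\geq|G|$ has a nonempty zero-sum subsequence of length at most $\mathsf h(W)$ --- is true, but it is a nontrivial theorem of Gao, the key lemma behind $\mathsf E(G)=\mathsf D(G)+|G|-1$ in \cite{Gao3} (it can also be found in \cite{GH06}); it has nothing to do with the Bovey--Erd\H{o}s--Niven lemma, which gives lower bounds for $|\Sigma(\cdot)|$ of zero-sum free sequences. Your induction sketch stalls at both of the points you would need: passing to $G/\langle a\rangle$ can drastically increase the maximal multiplicity (distinct terms collapse modulo $\langle a\rangle$), so the inductive bound $\mathsf h(\overline{W})$ is of no use for bounding by $\mathsf h(W)$; and lifting a zero-sum $\overline{Z}$ with $\sigma(Z)=ja$, $0<j<\ord(a)$, costs $\ord(a)-j$ additional copies of $a$ and inflates the length by the same amount, which is unaffordable precisely in the remaining case $\ord(a)>\mathsf h(W)$. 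The clean repair is to cite Gao's lemma rather than reprove it; with that citation in place your argument is complete.
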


\section{Proofs of Theorems~\ref{mainresults1} and \ref{mainresults2}}

\noindent{\bf Proof of Theorem~\ref{mainresults1}}

If $S$ is one of the forms in (1)-(4), it is easy to verify that $|\Sigma(S)|=|S|+|\mathrm{supp}(S)|-1$.

Next we assume that $S$ is a zero-sum free sequence over $G$ with $|\Sigma(S)|=|S|+|\mathrm{supp}(S)|-1$.

Let $S_1$ be a subsequence of $S$ such that $\mathrm{supp}(S_1)=\mathrm{supp}(S)$ and $|S_1|=|\mathrm{supp}(S)|$. Let $S_2=SS_1^{-1}$. Then
\begin{center}
$S=S_1S_2$, $|S|=|S_1|+|S_2|$, and $\mathrm{supp}(S_2)\subset \mathrm{supp}(S_1)$.
\end{center}

By Lemma~\ref{lemma of BEN}, we obtain that
\begin{align*}
|\Sigma(S)|&=|\Sigma(S_1S_2)|\geq |\Sigma(S_1)|+|\Sigma(S_2)|.
\end{align*}

We first prove that $|\mathrm{supp}(S_2)|\leq 1$, and thus $|\Sigma(S_2)|=|S_2|$. By Theorem~\ref{GHZSFS}, we obtain that
\begin{align*}
|\Sigma(S)|&\geq |\Sigma(S_1)|+|\Sigma(S_2)| \\
&\geq (|S_1|+|\mathrm{supp}(S_1)|-1)+(|S_2|+|\mathrm{supp}(S_2)|-1) \\
&=(|S_1|+|S_2|+|\mathrm{supp}(S_1)|-1)+(|\mathrm{supp}(S_2)|-1)\\
&=(|S|+|\mathrm{supp}(S)|-1)+(|\mathrm{supp}(S_2)|-1).
\end{align*}
Since $|\Sigma(S)|=|S|+|\mathrm{supp}(S)|-1$, we infer that $|\mathrm{supp}(S_2)|\leq 1$. In view of Lemma~\ref{lemma of ZSFS}, we obtain that $|\Sigma(S_2)|=|S_2|$.

We next prove that $|\Sigma(S_1)|\leq 2|S_1|-1$. Assume to the contrary that $|\Sigma(S_1)|\geq 2|S_1|$. Then we obtain that
\begin{align*}
|\Sigma(S)|\geq |\Sigma(S_1)|+|\Sigma(S_2)| \geq 2|S_1|+|S_2|=|S|+|\mathrm{supp}(S)|,
\end{align*}
yielding a contradiction to that $|\Sigma(S)|=|S|+|\mathrm{supp}(S)|-1$. So $$|\Sigma(S_1)|\leq 2|S_1|-1.$$
It follows from Lemma~\ref{lemma of GH} that either $|S_1|=|\mathrm{supp}(S)|\leq 2$ or $S_1=g_1\cdot g_2 \cdot (g_1+g_2)$, where $g_1, g_2\in G\setminus\{0\}$, $2g_2=0$, $2g_1\neq 0$ and $2g_1+g_2\neq 0$.

We distinguish there cases.

{\bf Case 1.} $|S_1|=|\mathrm{supp}(S)|= 1$. Then $S=g^{|S|}$ for some $g\in G$. In this case $\Sigma(S)=\{g,2g,\ldots,|S|g\}$. Hence $|\Sigma(S)|=|S|=|S|+|\mathrm{supp}(S)|- 1$. Since $S$ is a zero-sum free sequence, we obtain that $\ord(g)\geq |S|+1$. So $S$ is of form (1).

{\bf Case 2.} $|S_1|=|\mathrm{supp}(S)|= 2$. Then $S_1=g_1\cdot g_2$ for some $g_1, g_2 \in G$ with $g_1\neq g_2$.

If $|\mathrm{supp}(S_2)|=0$, then $S=S_1=g_1\cdot g_2$, and thus $S$ is of form (2).

Next we assume that $|\mathrm{supp}(S_2)|=1$. Without loss of generality, we assume that $S_2=g_1^{|S|-2}$ with $|S|\geq 3$. So $S=g_1^{|S|-1}\cdot g_2$, and we will prove that $g_2=2g_1$. Since $S$ is a zero-sum free sequence, we infer that $$g_1, 2g_1, \ldots, (|S|-1)g_1, (|S|-2)g_1+g_2, (|S|-1)g_1+g_2\in \Sigma(S)$$ are pairwise distinct. Since $|\Sigma(S)|=|S|+|\mathrm{supp}(S)|-1=|S|+1$, we obtain that
$$\Sigma(S)=\{g_1, 2g_1, \ldots, (|S|-1)g_1, (|S|-2)g_1+g_2, (|S|-1)g_1+g_2\}.$$ Note that
$$(|S|-3)g_1+g_2\in \Sigma(S).$$ Since $S$ is a zero-sum free sequence, we infer that $(|S|-3)g_1+g_2=(|S|-1)g_1$, and thus $g_2=2g_1$. So $S$ is of form (3) by taking $g=g_1$.

{\bf Case 3.} $S_1=g_1\cdot g_2 \cdot (g_1+g_2)$, where $g_1, g_2\in G\setminus\{0\}$, $2g_2=0$, $2g_1\neq 0$ and $2g_1+g_2\neq 0$.

We will prove that $|\mathrm{supp}(S_2)|=0$, then $S=S_1=g_1\cdot g_2 \cdot (g_1+g_2)$, and thus $S$ is of form (4).

Assume to the contrary that $|\mathrm{supp}(S_2)|=1$. Since $S$ is zero-sum free, we infer that $g_2\not\in \mathrm{supp}(S_2)$. So $\mathrm{supp}(S_2) \subset \{g_1, g_1+g_2\}$. Without loss of generality, we assume that $S_2=g_1^{|S|-3}$ with $|S|\geq 4$.  So $S=g_1^{|S|-2}\cdot g_2\cdot (g_1+g_2)$ and $|\Sigma(S)|=|S|+|\mathrm{supp}(S)|-1=|S|+2$. Since $S$ is a zero-sum free sequence and $2g_2=0$, we infer that
\begin{align*}
g_1, 2g_1, \ldots, (|S|-1)g_1,&  (|S|-4)g_1+g_2, (|S|-3)g_1+g_2, \\ (|S|-2)g_1+g_2,& (|S|-1)g_1+g_2\in \Sigma(S)
\end{align*}
are pairwise distinct. It follows that $|\Sigma(S)|\geq |S|+3> |S|+|\mathrm{supp}(S)|-1$, yielding a contradiction. Therefore, $|\mathrm{supp}(S_2)|=0$, and we are done.

This completes the proof. \qed

\bigskip

\noindent{\bf Proof of Theorem~\ref{mainresults2}}

Since $S=0^{n-3}\cdot g^{n-3}\cdot (2g)\cdot ((n-1)g)$, we infer that  $|S|=2n-4=n+r$ with $r=n-4$, and
$$\Sigma_{n}(S)=\{g, 2g, \ldots, (n-1)g\}.$$ So $0\not\in \Sigma_{n}(S)$ and $|\Sigma_{n}(S)|=n-1.$

Assume to the contrary that there exists a zero-sum free sequence $T$ over $G$ with $|T|=r+1=n-3$ such that $$|\Sigma_{n}(S)| \geq |\Sigma(T)|$$ and $$|\mathrm{supp}(T)|\geq \min\{r+1, |\mathrm{supp}(S)|-1\}.$$
Since $ |\mathrm{supp}(S)|=4$, we have that
\begin{align*}
|\mathrm{supp}(T)|\geq \min\{r+1, |\mathrm{supp}(S)|-1\}=\min\{n-3, 3\}=3.
\end{align*}
By Theorem~\ref{GHZSFS}, we obtain that
\begin{align*}
|\Sigma(T)|\geq |T|+|\mathrm{supp}(T)|-1\geq n-3+3-1=n-1.
\end{align*}
Since $|\Sigma(T)|\leq |\Sigma_{n}(S)|=n-1$, we obtain that
\begin{equation*}
|\Sigma(T)|=|T|+|\mathrm{supp}(T)|-1=n-1,
\end{equation*}
and thus $|\mathrm{supp}(T)|=3$. By Theorem~\ref{mainresults1} we infer that $T$ is of form $T=g_1\cdot g_2 \cdot (g_1+g_2)$, where $g_1, g_2\in G\setminus\{0\}$, $2g_2=0$, $2g_1\neq 0$ and $g_2+2g_1\neq 0$. But $n$ is odd, yielding a contradiction to that $2g_2=0$.

This completes the proof. \qed

\section{Proofs of Theorems~\ref{mainresults3} and \ref{GGXNZSFS}}

\noindent{\bf Proof of Theorem~\ref{mainresults3}}

\begin{proof}

Without loss of generality we assume that $G=\langle \mathrm{supp}(S)\rangle$. Let $r=|S|-k$. We will prove the theorem by induction on $r$.

\medskip
Suppose $r=1$. Then $|S|=k+1$, and we have that
\begin{equation*}
\Sigma_{\geq k}(S)=\Sigma_{k}(S)\cup \Sigma_{ k+1}(S)=(\sigma(S)-\mathrm{supp}(S))\cup \{\sigma(S)\}.
\end{equation*}
Since $0\not\in \mathrm{supp}(S)$, we infer that
\begin{equation*}
(\sigma(S)-\mathrm{supp}(S))\cap \{\sigma(S)\}=\emptyset.
\end{equation*}
Hence
\begin{align*}
|\Sigma_{\geq k}(S)| =|\sigma(S)-\mathrm{supp}(S)| + |\{\sigma(S)\}| = 1 + |\mathrm{supp}(S)|.
\end{align*}
We are done.

\medskip
Assume that the theorem holds for integer $r$, where $r\geq 1$.

\medskip
Next we suppose that $|S|=k+r+1$, with $0\not\in \mathrm{supp}(S)$ and $0\not\in \Sigma_{\geq k}(S)$. We will prove that $|\Sigma_{\geq k}(S)|\geq r+1+|\mathrm{supp}(S)|$. Assume to the contrary that
\begin{equation}\label{upbound}
|\Sigma_{\geq k}(S)|\leq  r+|\mathrm{supp}(S)|.
\end{equation}

For every $g\in \mathrm{supp}(S)$, we have that
\begin{equation}\label{subset}
\left(g+\Sigma_{\geq k}(Sg^{-1})\right)\cup \left(\Sigma_{\geq k}(Sg^{-1})\right)\subset \Sigma_{\geq k}(S).
\end{equation}
So
\begin{equation*}
|\Sigma_{\geq k}(S)| \geq |\Sigma_{\geq k}(Sg^{-1})|.
\end{equation*}

\medskip

{\bf Claim 1.} If $|\Sigma_{\geq k}(S)|=|\Sigma_{\geq k}(Sg^{-1})|$ for some $g\in \mathrm{supp}(S)$, then $|(\Sigma_{\geq k}(S)+g)\setminus \Sigma_{\geq k}(S)|=0$.

{\it Proof of Claim 1.}
If $|\Sigma_{\geq k}(S)|=|\Sigma_{\geq k}(Sg^{-1})|$ for some $g\in \mathrm{supp}(S)$, we infer that
\begin{equation*}
|\Sigma_{\geq k}(S)| = |\Sigma_{\geq k}(Sg^{-1})| = |g+\Sigma_{\geq k}(Sg^{-1})|.
\end{equation*}
By \eqref{subset}, we have
\begin{equation*}
\Sigma_{\geq k}(S)= g+\Sigma_{\geq k}(Sg^{-1})=\Sigma_{\geq k}(Sg^{-1}).
\end{equation*}
It follows that $\Sigma_{\geq k}(Sg^{-1})$ is a union of cosets of $\langle g\rangle$, and so is $\Sigma_{\geq k}(S)$. So
$\Sigma_{\geq k}(S)+g= \Sigma_{\geq k}(S)$, and this proves Claim 1.

\medskip

{\bf Claim 2.} If $|\Sigma_{\geq k}(S)| > |\Sigma_{\geq k}(Sg^{-1})|$ for some $g\in \mathrm{supp}(S)$, then
\begin{itemize}
\item[(1)] $\mathsf v_g(S)=1$.
\item[(2)] $|\Sigma_{\geq k}(S)|=r+|\mathrm{supp}(S)|$;
\item[(3)] $|\Sigma_{\geq k}(Sg^{-1})|=r+|\mathrm{supp}(S)|-1$;
\item[(4)] $|(\Sigma_{\geq k}(S)+g) \setminus \Sigma_{\geq k}(S) | \leq 1$.
\end{itemize}

{\it Proof of Claim 2.}  We first prove that $\mathsf v_g(S)=1$. Assume to the contrary that $\mathsf v_g(S)\geq 2$, we infer that $0\not\in \mathrm{supp}(Sg^{-1})=\mathrm{supp}(S)$ and  $0\not\in \Sigma_{\geq k}(Sg^{-1})$. By the induction hypothesis, we obtain that
\begin{equation}\label{lowerbound1}
|\Sigma_{\geq k}(Sg^{-1})|\geq r+|\mathrm{supp}(Sg^{-1})|=r+|\mathrm{supp}(S)|.
\end{equation}
by \eqref{upbound}, \eqref{subset} and \eqref{lowerbound1}, we infer that
\begin{equation*}
|\Sigma_{\geq k}(S)|= |g+\Sigma_{\geq k}(Sg^{-1})|=|\Sigma_{\geq k}(Sg^{-1})|=r+|\mathrm{supp}(S)|,
\end{equation*}
yielding a contradiction to that $|\Sigma_{\geq k}(S)| > |\Sigma_{\geq k}(Sg^{-1})|$. Therefore, $\mathsf v_g(S)=1$. This proves (1).

Since $\mathsf v_g(S)= 1$, we infer that $0\not\in \mathrm{supp}(Sg^{-1})$, $|\mathrm{supp}(Sg^{-1})|=|\mathrm{supp}(S)|-1$, and  $0\not\in \Sigma_{\geq k}(Sg^{-1})$. By the induction hypothesis, we obtain that
\begin{equation}\label{lowerbound2}
|\Sigma_{\geq k}(Sg^{-1})|\geq r+|\mathrm{supp}(Sg^{-1})|=r+|\mathrm{supp}(S)|-1.
\end{equation}
Since $|\Sigma_{\geq k}(S)| > |\Sigma_{\geq k}(Sg^{-1})|$, by \eqref{upbound}, \eqref{subset} and \eqref{lowerbound2}, we infer that
\begin{equation}\label{equation1}
 |\Sigma_{\geq k}(S)|=r+|\mathrm{supp}(S)| \mbox{ and } |\Sigma_{\geq k}(Sg^{-1})|=r+|\mathrm{supp}(S)|-1.
\end{equation}
This proves (2) and (3).

It follows from \eqref{equation1} that
\begin{equation}\label{equation2}
|\Sigma_{\geq k}(S)|= |g+\Sigma_{\geq k}(Sg^{-1})|+1=|\Sigma_{\geq k}(Sg^{-1})|+1.
\end{equation}

If $g+\Sigma_{\geq k}(Sg^{-1})\neq \Sigma_{\geq k}(Sg^{-1})$, by \eqref{subset} and \eqref{equation2}, we infer that
\begin{equation*}
\Sigma_{\geq k}(S)=\left(g+\Sigma_{\geq k}(Sg^{-1})\right)\cup \left(\Sigma_{\geq k}(Sg^{-1})\right),
\end{equation*}
and
\begin{equation*}
\left|\left(g+\Sigma_{\geq k}(Sg^{-1})\right)\setminus \Sigma_{\geq k}(Sg^{-1})\right|=1.
\end{equation*}
These imply that $\Sigma_{\geq k}(Sg^{-1})$ is a union of  some cosets of $\langle g\rangle$ and an arithmetic progression with difference $g$, and so is $\Sigma_{\geq k}(S)$. Therefore,
\begin{equation*}
|\left(\Sigma_{\geq k}(S)+g\right)\setminus \Sigma_{\geq k}(S)|\leq 1.
\end{equation*}

Next we assume that $g+\Sigma_{\geq k}(Sg^{-1})= \Sigma_{\geq k}(Sg^{-1})$. By \eqref{subset} and \eqref{equation2}, we have that
\begin{equation}\label{equation3}
\Sigma_{\geq k}(S)=\Sigma_{\geq k}(Sg^{-1})\cup \{h\}
\end{equation}
for some $h\in G\setminus \left(\Sigma_{\geq k}(Sg^{-1})\right)$. Since $g+\Sigma_{\geq k}(Sg^{-1})= \Sigma_{\geq k}(Sg^{-1})$, we infer that $\Sigma_{\geq k}(Sg^{-1})$ is a union of cosets of $\langle g\rangle$, so
\begin{equation}\label{equation4}
\Sigma_{\geq k}(S)+g=\left(\Sigma_{\geq k}(Sg^{-1})+g\right)\cup \{h+g\}=\Sigma_{\geq k}(Sg^{-1})\cup \{h+g\}.
\end{equation}
Combine \eqref{equation3} and \eqref{equation4}, we obtain that
\begin{equation*}
|\left(\Sigma_{\geq k}(S)+g\right)\setminus \Sigma_{\geq k}(S)|=1.
\end{equation*}
This proves (4).

This completes the proof of Claim 2.

\medskip

{\bf Claim 3.} $\mathsf v_g(S)=1$ for every $g\in \mathrm{supp}(S)$, $|\Sigma_{\geq k}(S)|=r+|\mathrm{supp}(S)|$, and $|\left(\Sigma_{\geq k}(S)+g\right)\setminus \Sigma_{\geq k}(S)|=1$ for every $g\in \mathrm{supp}(S)$.

{\it Proof of Claim 3.} Applying Lemma~\ref{pixton 1} with $X=\mathrm{supp}(S)$ and $Y=\Sigma_{\geq k}(S)$, we infer that
\begin{equation*}
\sum_{g\in \mathrm{supp}(S)}|\left(\Sigma_{\geq k}(S)+g\right)\setminus \Sigma_{\geq k}(S)|\geq |\mathrm{supp}(S)|.
\end{equation*}
On the other hand, by Claim 1 and Claim 2, we obtain that
\begin{equation*}
\sum_{g\in \mathrm{supp}(S)}|\left(\Sigma_{\geq k}(S)+g\right)\setminus \Sigma_{\geq k}(S)|\leq \sum_{g\in \mathrm{supp}(S)} 1= |\mathrm{supp}(S)|.
\end{equation*}
So
\begin{equation*}
\sum_{g\in \mathrm{supp}(S)}|\left(\Sigma_{\geq k}(S)+g\right)\setminus \Sigma_{\geq k}(S)|= |\mathrm{supp}(S)|,
\end{equation*}
and thus
\begin{equation*}
|\left(\Sigma_{\geq k}(S)+g\right)\setminus \Sigma_{\geq k}(S)|=1,
\end{equation*}
for every $g\in \mathrm{supp}(S)$.

By Claim 1 and Claim 2, we obtain that $\mathsf v_g(S)=1$ for every $g\in \mathrm{supp}(S)$ and $|\Sigma_{\geq k}(S)|=r+|\mathrm{supp}(S)|$. This proves Claim 3.

\medskip
{\bf Claim 4.} $|G\setminus \Sigma_{\geq k}(S)|=1$.

{\it Proof of Claim 4.} Since $0\not\in \Sigma_{\geq k}(S)$, we infer that
\begin{equation*}
|G\setminus \Sigma_{\geq k}(S)|\geq 1.
\end{equation*}

By Claim 3, we have that $|\left(\Sigma_{\geq k}(S)+g\right)\setminus \Sigma_{\geq k}(S)|=1$ for every $g\in \mathrm{supp}(S)$.

Suppose there exists a subset $Y\subset S$ such that $|\langle Y\rangle|> 1$ and $|G/\langle Y\rangle| > 1$. By Lemma~\ref{m^2}, we obtain that
\begin{equation*}
\min\{|\Sigma_{\geq k}(S)|, |G\setminus \Sigma_{\geq k}(S)|\}\leq 1.
\end{equation*}
By Claim 3, we have
\begin{equation*}
|\Sigma_{\geq k}(S)| = r+|\mathrm{supp}(S)| = r+(k+r+1) = k+2r+1 \geq 4.
\end{equation*}
So $|G\setminus \Sigma_{\geq k}(S)|\leq 1$, and thus $|G\setminus \Sigma_{\geq k}(S)|=1$. We are done.

Next we assume that there does not exist a subset $Y\subset S$ such that $|\langle Y\rangle|> 1$ and $|G/\langle Y\rangle|> 1$. It follows that $G$ is a cyclic group and every element of $\mathrm{supp}(S)$ is of order $|G|$. Since $|\left(\Sigma_{\geq k}(S)+g\right)\setminus \Sigma_{\geq k}(S)|=1$ for every $g\in \mathrm{supp}(S)$ and $|\Sigma_{\geq k}(S)|  \geq 4$, we infer that $\Sigma(S)$ is an arithmetic progression with difference $g$ for every $g\in \mathrm{supp}(S)$. If $|\Sigma_{\geq k}(S)|\leq |G|-2$, then by Lemma~\ref{cyclic} we obtain that $|\mathrm{supp}(S)|\leq 2$, yielding a contradiction to that $|\mathrm{supp}(S)|=|S|=k+r+1\geq 3$. Therefore, $|\Sigma_{\geq k}(S)|= |G|-1$. So $|G\setminus \Sigma_{\geq k}(S)|=1$. We are done.

This proves Claim 4.

\medskip

By Claim 3 and Claim 4, we have that
\begin{equation*}\label{sizeofG}
|G|=|\Sigma_{\geq k}(S)|+1=r+|\mathrm{supp}(S)|+1=k+2r+2.
\end{equation*}

Write $S=S_2S_3$ such that $S_2$ contains all the elements of order $2$ in $S$ and $S_3$ contains all the elements of order at least $3$ in $S$. Since $\mathsf v_g(S)=1$ for every $g\in \mathrm{supp}(S)$, we infer that $S, S_2, S_3$ are subsets of $G$.

Let $S_i'$ be the largest zero-sum subsequence (in length) of $S_i$ ($|S_i'|=0$ if $S_i$ is zero-sum free), where $i=2,3$. Then $S_2'S_3'$ is a zero-sum subsequence of $S$. Since $0\not\in \Sigma_{\geq k}(S)$, we infer that
\begin{equation}\label{sizeofzero-sumsubsequence}
|S_2'|+|S_3'| = |S_2'S_3'| < k.
\end{equation}

Let $x=|S_2|$ and $y=|S_2|-|S_2'|=|S_2S_2'^{-1}|$. Then
\begin{equation*}
|S_3|=|S|-|S_2|=k+r+1-x.
\end{equation*}
Let $t$ be the largest integer such that $C_2^t$ is a subgroup of $G$ ($t=0$ if $|G|$ is odd). So $S_2$ is subset of $C_2^t$ with $0\not\in \mathrm{supp}(S_2)$. So $x\leq 2^t-1$.

\medskip
{\bf Claim 5.} $x+y>2^t$ and $t\geq 2$.

{\it Proof of Claim 5.} If $|S_3| \leq \frac{|G|-|C_2^t|}{2}$, i.e. $k+r+1-x<\frac{k+2r+2-2^t}{2}$, then we have
\begin{equation}\label{2x-k}
2x-k\geq 2^t.
\end{equation}
By \eqref{sizeofzero-sumsubsequence}, we infer that $x-y=|S_2'|< k.$ This together with \eqref{2x-k} yields that $x+y > 2^t$.  By \eqref{2x-k}, we obtain $x\geq 2^{t-1}+k/2 > 2^{t-1}$. Since $x\leq 2^t-1$, we infer that $t\geq 2$, and we are done.

Next we assume that
\begin{equation*}
|S_3| > \frac{|G|-|C_2^t|}{2}.
\end{equation*}
Note that we can divide the set $G\setminus C_2^t$ into $\frac{|G|-|C_2^t|}{2}$ subsets of form $\{g, -g\}$. Since $S_3$ is a subset of $G\setminus C_2^t$, we obtain that $S_3$ contains at least
\begin{align*}
|S_3|-\frac{|G|-|C_2^t|}{2}=(k+r+1-x)-\frac{k+2r+2-2^t}{2}=\frac{k-2x+2^t}{2}
\end{align*}
subsets of form $\{g,-g\}$. Since each subset of form $\{g, -g\}$ is a zero-sum set, we infer that
\begin{equation}\label{k}
|S_3'| \geq 2 \cdot \frac{k-2x+2^t}{2}=k-2x+2^t.
\end{equation}
Since $|S_2'|=x-y$, by \eqref{sizeofzero-sumsubsequence} and \eqref{k}, we infer that
\begin{equation*}
k> |S_2'|+|S_3'|\geq (x-y)+(k-2x+2^t).
\end{equation*}
It follows that
\begin{equation*}\label{x+y}
x+y> 2^t.
\end{equation*}
Also by \eqref{sizeofzero-sumsubsequence} and \eqref{k}, we obtain that $k-2x+2^t\leq |S_3|<k$. It follows that $x > 2^{t-1}$. Since $x\leq 2^t-1$,  we infer that $t\geq 2$, and we are done. This proves Claim 5.

\medskip
If $x=2^t-1$, then $S_2=C_2^t\setminus \{0\}$. Since $t\geq 2$, by Lemma~\ref{sumofelementsoforder2}, we obtain that $S_2$ is a zero-sum subset of $C_2^t$. By the definition of $y$, we obtain that $y=0$, and thus $x+y=2^t-1<2^t$, yielding a contradiction to Claim 5.

If $x=2^t-2$, then $S_2=C_2^t \setminus \{0, h\}$ for some $h\neq 0$. Since $t\geq 2$, we can find $h_1, h_2\in S_2$ such that $h_1+h_2=h$. By Lemma~\ref{sumofelementsoforder2}, we obtain that $S_2h$ is a zero-sum subset of $C_2^t$. So $S_2 h_1^{-1}h_2^{-1}$ is a zero-sum subset of $C_2^t$. By the definition of $y$, we obtain that $y\leq 2$. So $x+y\leq 2^t-2+2=2^t$,  yielding a contradiction to Claim 5.

Next we assume that $x\leq 2^t-3$. By Claim 5, we obtain that $y > 2^t-x\geq 3$. So $|S_2S_2'^{-1}|=y\geq 4$. Since $S_2S_2'^{-1}$ is a zero-sum free set, by Lemma~\ref{lemma of GH}.3, we obtain that
\begin{equation*}
|\Sigma(S_2S_2'^{-1})|\geq 2|S_2S_2'^{-1}|=2y.
\end{equation*}
Since $\Sigma(S_2S_2'^{-1}), S_2' \subset C_2^t$ and
\begin{equation*}
|\Sigma(S_2S_2'^{-1})|+|S_2'| \geq 2y+(x-y)=x+y > 2^t,
\end{equation*}
we infer that
\begin{equation*}
\Sigma(S_2S_2'^{-1}) \cap S_2' \neq \emptyset.
\end{equation*}
Moreover, since $S_2$ is a set, we infer that $S_2S_2'^{-1} \cap S_2'=\emptyset.$ So
\begin{equation*}
\Sigma_{\geq 2}(S_2S_2'^{-1}) \cap S_2' \neq \emptyset.
\end{equation*}
This implies that there exist $h\in S_2'$ and a subsequence $S_2''$ of $S_2S_2'^{-1}$ such that $h=\sigma(S_2'')$ and $|S_2''|\geq 2$. Then $S_2'h^{-1}S_2''$ is a zero-sum sequence  with length greater that $S_2'$, yielding a contradiction to  the definition of $S_2'$.

Therefore, $|\Sigma_{\geq k}(S)|\geq  r+|\mathrm{supp}(S)|+1$.

This completes the proof.
\end{proof}

\noindent{\bf Proof of Theorem~\ref{GGXNZSFS}}

\begin{proof}
Note that this theorem is translation-invariant, so we may assume that $\mathsf v_{0}(S)= \mathsf h(S)$. Since $0\not\in \Sigma_{n}(S)$, we infer that $\mathsf v_{0}(S)=\mathsf h(S) \leq n-1$. Let $k=n-\mathsf h(S)$. Then $1\leq k <n$, and we can write $S$ as
\begin{equation*}
S=0^{n-k}T
\end{equation*}
with $0\not\in \mathrm{supp}(T)$. Then we have
\begin{equation*}
|\mathrm{supp}(T)|=|\mathrm{supp}(S)|-1.
\end{equation*}
Since $|S|=n+r >n$, we infer that $|T|=|S|-(n-k)=r +k > k$. By Lemma~\ref{Lemma 2.5}, we obtain that
\begin{equation*}
\Sigma_{n}(S)=\Sigma_{\geq k }(T).
\end{equation*}
Since $0\not\in \Sigma_{n}(S)$, we infer that $0\not\in \Sigma_{\geq k}(T)$. By Theorem~\ref{mainresults3}, we obtain that
\begin{equation*}
|\Sigma_{n}(S)|=|\Sigma_{\geq k }(T)|\geq |T|-k+|\mathrm{supp}(T)|=r+|\mathrm{supp}(S)|-1.
\end{equation*}
We are done.
\end{proof}

\section{Concluding remarks}

By Theorem~\ref{mainresults2}, we obtain a counterexample of Conjecture~\ref{conjecture of GGX}. It is interesting to determine when Conjecture~\ref{conjecture of GGX} holds true.

Let $G$ be a finite abelian group, and $k,r$ two positive integers. Let
\begin{align*}
\mathsf f_{G,k}(r)=\min\{|\Sigma_{k}(S)| \mid & S \mbox{ is a sequence over } G \\ & \mbox{ with } |S|= k+r \mbox{ and } 0\not\in \Sigma_{k}(S)\}.
\end{align*}

Based on some known results, we suggest the following conjecture.

\begin{conjecture}\label{new conjecture}
Let $G$ be a finite abelian group and $S$ a sequence over $G$ with $|S|=|G|+r$, where $r \in [0, \mathsf D(G)-2]$. Suppose  $0\not\in \Sigma_{|G|}(S)$. If $|\Sigma_{|G|}(S)|=\mathsf f_{G,|G|}(r)$, then  there exists a zero-sum free sequence $T$ over $G$ with $|T|=r+1$ such that $|\Sigma_{|G|}(S)| \geq |\Sigma(T)|$ and $|\mathrm{supp}(T)|\geq \min\{r+1, |\mathrm{supp}(S)|-1\}$.
\end{conjecture}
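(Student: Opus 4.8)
The plan is to reduce to the $\Sigma_{\geq k}$ setting and then build $T$ by deleting a short ``hitting set'' from the reduced sequence, isolating all the genuine difficulty in the support inequality. Since the statement is translation-invariant (as $\Sigma_{|G|}(g+S)=\Sigma_{|G|}(S)$ because $|G|\cdot g=0$, and both $|\mathrm{supp}(S)|$ and $\mathsf f_{G,|G|}(r)$ are unchanged under translation), I would first normalize $S$ so that $\mathsf v_0(S)=\mathsf h(S)$, set $n=|G|$, $k=n-\mathsf h(S)\in[1,n-1]$, and write $S=0^{n-k}T_0$ with $0\notin\mathrm{supp}(T_0)$, $|T_0|=r+k$ and $|\mathrm{supp}(T_0)|=|\mathrm{supp}(S)|-1$. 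Lemma~\ref{Lemma 2.5} then gives $\Sigma_{|G|}(S)=\Sigma_{\geq k}(T_0)$, so $0\notin\Sigma_{\geq k}(T_0)$; in particular every zero-sum subsequence of $T_0$ has length at most $k-1$.

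The construction of $T$ and the first required inequality are cheap and use no extremality. Let $Z$ be a zero-sum subsequence of $T_0$ of maximal length, so $|Z|\leq k-1$, and extend $Z$ to a subsequence $W$ of $T_0$ with $|W|=k-1$. Put $T=T_0W^{-1}$, a subsequence of $T_0Z^{-1}$ of length $r+1$; since any nonempty zero-sum subsequence of $T_0Z^{-1}$ would together with $Z$ contradict the maximality of $Z$, the sequence $T$ is zero-sum free. For every nonempty subsequence $U$ of $T$ we have $|WU|\geq k$ and $\sigma(WU)=\sigma(W)+\sigma(U)$, whence $\sigma(W)+\Sigma(T)\subseteq\Sigma_{\geq k}(T_0)=\Sigma_{|G|}(S)$ and therefore $|\Sigma(T)|\leq|\Sigma_{|G|}(S)|$. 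Thus $T$ already meets the size condition for \emph{any} admissible $W$; the only freedom left, namely which elements of $T_0Z^{-1}$ to retain when enlarging $Z$ to $W$, can be spent entirely on the support of $T$.

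The whole difficulty is therefore the support inequality $|\mathrm{supp}(T)|\geq\min\{r+1,|\mathrm{supp}(S)|-1\}$, and this is where the hypothesis $|\Sigma_{|G|}(S)|=\mathsf f_{G,|G|}(r)$ enters. Since we may keep any $\min\{r+1,|\mathrm{supp}(T_0Z^{-1})|\}$ distinct elements inside $T$, it suffices to choose the maximal zero-sum $Z$ so that $\mathrm{supp}(T_0Z^{-1})$ still contains $\min\{r+1,|\mathrm{supp}(T_0)|\}=\min\{r+1,|\mathrm{supp}(S)|-1\}$ distinct elements; equivalently, $Z$ must not exhaust all copies of too many supports. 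In the generic situation $\mathsf f_{G,|G|}(r)=r+1$, the equality $|\Sigma_{|G|}(S)|=r+1$ forces $|\mathrm{supp}(S)|=2$ by the Yu--Xia characterization, hence $|\mathrm{supp}(T_0)|=1$ and $T_0=g^{r+k}$; here $0\notin\Sigma_{\geq k}(T_0)$ yields $\ord(g)\geq r+2$, so $T=g^{r+1}$ is zero-sum free with $|\mathrm{supp}(T)|=1=\min\{r+1,1\}$, and the conjecture holds outright.

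The main obstacle is the non-generic extremal case, typified by groups of small exponent such as $C_2^t$, where no $2$-support sequence avoids $0$ in $\Sigma_{|G|}$, so $\mathsf f_{G,|G|}(r)>r+1$ and the extremal $S$ is forced to have $|\mathrm{supp}(S)|\geq 3$. For such $S$ the deletion of a length-$(k-1)$ hitting set can a priori destroy support, and one must show that global minimality of $|\Sigma_{|G|}(S)|$ rules this out. I expect this to require a structural description of the extremal $T_0$ in the spirit of the passage from Theorem~\ref{GHZSFS} to Theorem~\ref{mainresults1}: one would rerun the equality analysis in the proof of Theorem~\ref{mainresults3} (the coset and arithmetic-progression dichotomy of Claims~1--4 and the order-$2$ bookkeeping of Claim~5) while tracking the case $|\Sigma_{\geq k}(T_0)|=\mathsf f_{G,|G|}(r)$, deduce that the extremal $T_0$ splits into an elementary-$2$ part that is essentially a near-complete coset system together with a cyclic part generated by a high-order element, and then exhibit a maximal zero-sum $Z$ disjoint from a prescribed transversal of $\mathrm{supp}(T_0)$. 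Making this classification precise, and in particular pinning down $\mathsf f_{G,|G|}(r)$ for arbitrary $G$, is the crux and the step most likely to demand genuinely new work.
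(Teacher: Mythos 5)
You should note first that the statement you are proving is Conjecture~\ref{new conjecture}: the paper does \emph{not} prove it in general, but only verifies it in three special families (the concluding Proposition), in each case by citing an external inverse characterization (\cite{Yu2004,XQQ2014} for $\mathsf f_{G,|G|}(r)=r+1$, and \cite{PWHL2023}, \cite{HHL} for the rank-two cases). Within that framing, the parts of your argument that you actually carry out are correct. The translation-invariance normalization $\mathsf v_0(S)=\mathsf h(S)$, the choice $k=n-\mathsf h(S)$, the appeal to Lemma~\ref{Lemma 2.5} to get $\Sigma_{|G|}(S)=\Sigma_{\geq k}(T_0)$ — this is exactly the paper's own reduction in its proof of Theorem~\ref{GGXNZSFS}. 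Your construction of $T$ is also sound: with $Z$ a maximal zero-sum subsequence of $T_0$ (so $|Z|\leq k-1$, since $0\notin\Sigma_{\geq k}(T_0)$), any extension $W\supseteq Z$ with $|W|=k-1$ makes $T=T_0W^{-1}$ zero-sum free of length $r+1$, and $\sigma(W)+\Sigma(T)\subseteq\Sigma_{\geq k}(T_0)$ gives $|\Sigma_{|G|}(S)|\geq|\Sigma(T)|$ unconditionally. Your disposal of the generic case $\mathsf f_{G,|G|}(r)=r+1$ via the Yu--Xia characterization ($|\mathrm{supp}(S)|=2$, hence $T_0=g^{r+k}$ with $\ord(g)\geq r+2$) is also correct and matches part 1 of the paper's Proposition.

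The genuine gap is the one you yourself flag: the support inequality $|\mathrm{supp}(T)|\geq\min\{r+1,|\mathrm{supp}(S)|-1\}$ in the non-generic extremal cases is the entire content of the conjecture, and your proposal replaces it with a program (``rerun the equality analysis of Claims 1--5, classify the extremal $T_0$, exhibit a maximal zero-sum $Z$ avoiding a transversal of the support'') rather than an argument. This is not a formality that routine bookkeeping would close: the paper's Theorem~\ref{mainresults2} exhibits precisely the failure mode of your hitting-set construction when extremality is dropped. For $S=0^{n-3}\cdot g^{n-3}\cdot(2g)\cdot((n-1)g)$ one has $T_0=g^{n-3}\cdot(2g)\cdot((n-1)g)$ and $k=3$, every admissible deletion yields $T$ with $|\mathrm{supp}(T)|\leq 2<3=\min\{r+1,|\mathrm{supp}(S)|-1\}$, and Theorem~\ref{mainresults1} shows no other zero-sum free $T$ works either — so no choice of $Z$ and $W$ can be ``spent on the support'' there; only the hypothesis $|\Sigma_{|G|}(S)|=\mathsf f_{G,|G|}(r)$ could conceivably rescue the construction, and proving that it does requires determining $\mathsf f_{G,|G|}(r)$ and the structure of its extremal sequences for general $G$, which is open (the paper poses it as Problem 5.4). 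So your proposal establishes the first inequality of the conjecture unconditionally and settles case 1 of the paper's Proposition, but it does not prove the statement, and no complete proof exists in the paper to compare against.
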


We have the following results

\begin{proposition}
Conjecture~\ref{new conjecture} holds true in the following cases.
\begin{itemize}
\item[1.] $G$ is a finite abelian group and $1\leq r\leq \exp(G)-2$. In this case, we have $\mathsf f_{G,|G|}(r)=r+1$.
\item[2.] $G=C_p \oplus C_p$ and $p-1\leq r \leq 2p-3$. In this case, we have $\mathsf f_{G,|G|}(r)=(r-p+3)p-1$.
\item[3.] $G=C_{n_1} \oplus C_{n_2}$ with $1 < n_1 \mid n_2$  and $r \in \{n_1+n_2-4, n_1+n_2-3\}$. In this case, we have $\mathsf f_{G,|G|}(r)=(r-n_1+3)n_2-1$.
\end{itemize}
\end{proposition}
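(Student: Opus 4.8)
The plan is to verify the conjecture's conclusion case by case, but all three cases share one reduction. Fix $G$ and $r$ in the stated range and let $S$ be any extremal sequence, so $|\Sigma_{|G|}(S)|=\mathsf f_{G,|G|}(r)=:\mathsf f$; set $M=\min\{r+1,|\mathrm{supp}(S)|-1\}$. We must exhibit a zero-sum free $T$ with $|T|=r+1$, $|\Sigma(T)|\le \mathsf f$ and $|\mathrm{supp}(T)|\ge M$. Two constraints frame the search: Theorem~\ref{GGXNZSFS} gives $|\mathrm{supp}(S)|-1\le \mathsf f-r$, while Theorem~\ref{GHZSFS} forces any admissible $T$ to satisfy $|\mathrm{supp}(T)|\le |\Sigma(T)|-r\le \mathsf f-r$. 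The support sizes are thus a priori compatible, and the real work is to produce a $T$ of the prescribed support whose sumset is small enough.

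For part~1 I would first record that $\mathsf f=r+1$: the lower bound $|\Sigma_{|G|}(S)|\ge r+1$ is the Bollob\'as--Leader inequality \cite{BL99}, and $S=0^{\,|G|-1}\cdot g^{\,r+1}$ with $\ord(g)=\exp(G)\ge r+2$ attains it, since $\Sigma_{|G|}(S)=\{g,2g,\ldots,(r+1)g\}$ has $r+1$ distinct nonzero terms. By the equality characterization of Yu \cite{Yu2004} and Xia et al. \cite{XQQ2014}, every extremal $S$ has $|\mathrm{supp}(S)|=2$, so $M=1$. Taking $T=g^{\,r+1}$ with $\ord(g)=\exp(G)\ge r+2$ then finishes part~1: by Lemma~\ref{lemma of ZSFS} it is zero-sum free with $|\Sigma(T)|=r+1=\mathsf f$ and $|\mathrm{supp}(T)|=1=M$.

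For parts~2 and~3 the candidate is the two-element sequence $T=e_1^{\,n_1-1}\cdot e_2^{\,r-n_1+2}$, where $e_1,e_2$ are independent generators of orders $n_1,n_2$ (with $n_1=n_2=p$ in part~2). In the stated range one has $1\le r-n_1+2\le n_2-1$, so the only vanishing subsum is the empty one and $T$ is zero-sum free; its distinct subsums are precisely the nonzero $a e_1+b e_2$ with $0\le a\le n_1-1$ and $0\le b\le r-n_1+2$, giving $|\Sigma(T)|=n_1(r-n_1+3)-1$. Since $n_1\le n_2$, this is at most $n_2(r-n_1+3)-1=\mathsf f$, with equality exactly when $n_1=n_2$ (equality in part~2, strict inequality in part~3). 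As $|\mathrm{supp}(T)|=2$, this $T$ settles both parts \emph{provided} $M\le 2$, i.e.\ provided every extremal $S$ satisfies $|\mathrm{supp}(S)|\le 3$.

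The main obstacle, and the substantive content of parts~2 and~3, is therefore twofold: computing $\mathsf f_{G,|G|}(r)$ for these rank-two groups in the top range of $r$, and proving that extremal sequences have support at most $3$. For the structural fact I would translate so that $\mathsf v_0(S)=\mathsf h(S)$, write $S=0^{\,|G|-k}\cdot S'$ with $0\notin\mathrm{supp}(S')$ and $k=|G|-\mathsf h(S)$, and use Lemma~\ref{Lemma 2.5} to replace $\Sigma_{|G|}(S)$ by $\Sigma_{\ge k}(S')$, so that $|\mathrm{supp}(S')|=|\mathrm{supp}(S)|-1$ and Theorem~\ref{mainresults3} applies. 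That $|\mathrm{supp}(S)|\le 3$ is in fact forced if the conjecture is to hold: were $M\ge 3$ with $r\ge 3$, any admissible $T$ would need support at least $3$ and length $r+1\ge 4$ while meeting $|\Sigma(T)|=|T|+|\mathrm{supp}(T)|-1$, which Theorem~\ref{mainresults1} rules out (every equality form there has support at most $2$ once the length exceeds $3$), so $|\Sigma(T)|$ would exceed $\mathsf f$. Turning this necessity into a proof that the actual extremal $S$ has small support, using the precise value of $\mathsf f$ and the rank-two sumset theory, is the hardest step.
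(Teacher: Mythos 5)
Part 1 of your argument is essentially complete and matches the paper: the value $\mathsf f_{G,|G|}(r)=r+1$ comes from Bollob\'as--Leader, and the characterization of equality by Yu \cite{Yu2004} and Xia et al.\ \cite{XQQ2014} gives $|\mathrm{supp}(S)|=2$, after which $T=g^{r+1}$ works. Your candidate $T=e_1^{\,n_1-1}\cdot e_2^{\,r-n_1+2}$ for parts 2 and 3 is also the right kind of witness, and your computation $|\Sigma(T)|=n_1(r-n_1+3)-1\le n_2(r-n_1+3)-1$ is correct.

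However, for parts 2 and 3 there is a genuine gap, and you name it yourself without closing it: you never establish that $\mathsf f_{G,|G|}(r)=(r-n_1+3)n_2-1$ in these ranges, nor that every extremal $S$ has $|\mathrm{supp}(S)|\le 3$ (equivalently $M\le 2$). Both facts are indispensable --- without the first, the required inequality $|\Sigma(T)|\le|\Sigma_{|G|}(S)|$ is unverified, and without the second, your support-two candidate $T$ may fail the condition $|\mathrm{supp}(T)|\ge\min\{r+1,|\mathrm{supp}(S)|-1\}$. The auxiliary argument you offer for the support bound is not a proof: it only shows that $M\ge 3$ would be incompatible with the conjecture's conclusion (and even there the step forcing $|\Sigma(T)|=|T|+|\mathrm{supp}(T)|-1$ with equality is unjustified, since an admissible $T$ only needs $|\Sigma(T)|\le\mathsf f$, and $\mathsf f$ can exceed $r+|\mathrm{supp}(T)|$ in these ranges). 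One cannot assume the conjecture to derive the structure of $S$ and then use that structure to prove the conjecture. The paper closes exactly this gap by citing the inverse theorems for rank-two groups: Theorems 1.7 and 1.8 of \cite{PWHL2023} for $G=C_p\oplus C_p$ with $p-1\le r\le 2p-3$, and Theorems 1.1 and 1.2 of \cite{HHL} for $G=C_{n_1}\oplus C_{n_2}$ with $r\in\{n_1+n_2-4,\,n_1+n_2-3\}$, which determine $\mathsf f_{G,|G|}(r)$ and the structure (in particular the support size) of the extremal sequences. Your proposal would need either to invoke those results or to reprove them, which, as you acknowledge, is the hard part.
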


\begin{proof}
1. The result follows from \cite[Theorem 2]{Yu2004} and independently \cite[Theorem 1.1]{XQQ2014}.

2. The result follows from \cite[Theorem 1.7]{PWHL2023} and \cite[Theorem 1.8]{PWHL2023}.

3. The result follows from \cite[Theorem 1.1]{HHL} and \cite[Theorem 1.2]{HHL}.
\end{proof}

We close this paper by the following problems.

\begin{problem}
Determine the invariant $\mathsf f_{G,k}(r)$, where $G$ is a finite abelian group and $k,r$ are positive integers.
\end{problem}

\begin{problem}
Determine the structure of the sequence $S$ over a finite abelian group $G$ such that the equality holds in Theorem~\ref{mainresults3}.
\end{problem}

\bigskip

\noindent {\bf Acknowledgements}. This research was supported in part by  the National Natural Science Foundation of China (Grant No. 12271520), and was also supported in part by the Fundamental Research Funds for the Central Universities of Civil Aviation University of China (Grant No. 3122022082).

\end{document}